\date{\today}
\DeclareMathOperator{\Id}{Id}
\title[Residue currents and cycles of complexes of vector bundles]
{Residue currents and cycles of complexes \\ of vector bundles}
\author{Richard L\"ark\"ang \& Elizabeth Wulcan}
\thanks{The authors were partially supported by the Swedish Research Council.}
\subjclass[2010]{13D02, 14C99, 32A27, 32B15, 32C30}
\address{Department of Mathematical Sciences, University of Gothenburg and
Chalmers University of Technology, SE-412 96 G\"{o}teborg, Sweden}
\email{larkang@chalmers.se, wulcan@chalmers.se}
\newtheorem{thm}{Theorem}[section]
\newtheorem{lma}[thm]{Lemma}
\newtheorem{cor}[thm]{Corollary}
\newtheorem{prop}[thm]{Proposition}
\theoremstyle{definition}
\theoremstyle{remark}
\newtheorem{preremark}[thm]{Remark}
\newtheorem{preex}[thm]{Example}
\newenvironment{ex}{\begin{preex}}{\qed\end{preex}}
\newcommand{\F}{\mathcal{F}}
\newcommand{\C}{\mathbb{C}}
\newcommand{\R}{\mathbb{R}}
\newcommand{\J}{\mathcal{J}}
\newcommand{\E}{\mathcal{E}}
\newcommand{\Ok}{\mathcal{O}}
\newcommand{\w}{{\wedge}}
\newcommand{\codim}{{\text{codim}\,}}
\newcommand{\Hom}{{\text{Hom}\,}}
\newcommand{\End}{{\text{End}\,}}
\def\dbar{\bar{\partial}}
\DeclareMathOperator{\supp}{supp}
\DeclareMathOperator{\tr}{tr}
\DeclareMathOperator{\length}{length}
\DeclareMathOperator{\im}{im}
\DeclareMathOperator{\coker}{coker}
\numberwithin{equation}{section}
\begin{document}
\nocite{*}
\bibliographystyle{plain}

\begin{abstract}
    We give a factorization of the cycle of a bounded complex of
    vector bundles in terms of certain associated differential forms
    and residue currents.
This is a generalization of previous results
    in the case when the complex is a locally free resolution of the
    structure sheaf of an analytic space and it can be seen as
    a generalization of the classical Poincar\'e-Lelong formula.
\end{abstract}

\maketitle

\section{Introduction}

Given a holomorphic function $f$ on a complex manifold $X$, recall
that the classical \emph{Poincar\'e-Lelong formula} asserts that
$\dbar \partial \log |f|^2 = 2\pi i[Z]$,
where $\Ok_Z = \Ok_X/\J(f)$, $\J(f)$ is the ideal generated by $f$,
and $[Z]$ is the current of integration along $Z$,
counted with multiplicities or, more precisely, the (fundamental) cycle of $Z$.
Formally we can rewrite the Poincar\'e-Lelong formula as
\begin{equation}\label{poin}
\frac{1}{2\pi i} \dbar\frac{1}{f} \wedge df = [Z].
\end{equation}
This factorization of $[Z]$ can be made rigorous if we construe $\dbar(1/f)$
as the \emph{residue current} of $1/f$, where $1/f$ is the principal
value distribution
as introduced by Dolbeault, \cite{Dol}, and Herrera and Lieberman, \cite{HL}.
The current $\dbar(1/f)$ satisfies that a holomorphic
function $g$ on $X$ is $0$ in $\Ok_Z$ if and only if $g\dbar
(1/f)=0$. This is referred to as the \emph{duality principle} and it is
central to many applications of residue currents; in a way
$\dbar(1/f)$ can be thought of as a current representation of the
structure sheaf
~$\Ok_Z$.

In this article we give a similar analytic formula for the
cycle of any bounded complex of vector bundles.
The \emph{cycle} of the coherent sheaf $\mathcal{F}$ on $X$
is the cycle
\begin{equation*}%\label{eq:fundcycle}
    [\mathcal{F}] = \sum_i m_i [Z_i],
\end{equation*}
where $Z_i$ are the irreducible components of $\supp
\mathcal{F}$, and $m_i$ is the \emph{geometric multiplicity} of $Z_i$ in
$\mathcal{F}$.
For generic $z \in Z_i$, $\mathcal{F}$ can locally be given the
structure of a free $\Ok_{Z_i}$-module of
constant rank, and $m_i$ is %can be defined as
this rank.
Alternatively, expressed in an algebraic manner,
$m_i=\length_{\Ok_{Z,Z_i}}( \mathcal{F}_{Z_i})$,
see, e.g., \cite{K}*{Section~2}.
If $\mathcal F=\Ok_Z$, then $[\mathcal F]$ coincides with the cycle of $Z$, cf., e.g., \cite{Fulton}*{Chapter~1.5}.

Next, let
\begin{equation}\label{komplexet}
    0 \to E_{N} \stackrel{\varphi_{N}}{\longrightarrow} E_{N-1} \to \cdots \to
    E_1 \stackrel{\varphi_1}{\longrightarrow} E_{0}\to 0
\end{equation}
be a generically exact complex
of vector bundles on $X$. We let the
\emph{cycle} of $(E,\varphi)$ be the cycle
\begin{equation} \label{cycleofcomplex}
    [E] := \sum (-1)^\ell [\mathcal{H}_\ell(E)],
\end{equation}
where $\mathcal{H}_\ell(E)$ is the homology group of $(E,\varphi)$ at level $\ell$.
We have not found the definition of such a cycle in this setting in the literature. 
However, when all the homology groups have support at a single point, our cycle simply 
corresponds to the Euler characteristic of the complex, and \eqref{cycleofcomplex}
appears to be a natural generalization for general complexes.
Note that if $(E,\varphi)$ is a locally free resolution of a
coherent sheaf $\mathcal{F}$, i.e., it is exact at all levels $>0$ and
$\mathcal H_0(E) \cong \F$,
then $[E] = [\mathcal{F}]$.

If the $E_\ell$
are equipped with hermitian metrics, we say that $(E,\varphi)$ is a
\emph{hermitian} complex.
Given a hermitian complex $(E,\varphi)$ that is exact
outside a subvariety $Z\subset X$, in \cite{AW1} Andersson and
the second author introduced an associated residue current $R=R^E$
with support on $Z$,
that takes values in $\End E$, where $E=\oplus E_k$, and that in some
sense measures the exactness of $(E, \varphi)$.
In particular, if $(E,\varphi)$ is a locally free resolution of $\F$,
then the component $R^\ell_k$ %of $R$
that takes values in
$\Hom (E_\ell, E_k)$ vanishes if
$\ell >0$ and $R$ satisfies a duality principle for $\mathcal F$.

If $f$ is a holomorphic function on $X$
and $E_0\cong\Ok_X$ and $E_1\cong \Ok_X$ are trivial line bundles, then
\begin{equation*}
0 \xrightarrow[]{} \Ok_X \xrightarrow[]{\varphi_1} \Ok_X\to 0,
\end{equation*}
where $\varphi_1$ is the $1\times 1$-matrix $[f]$, gives a locally free resolution of $\mathcal
O_Z=\mathcal O/\mathcal J(f)$.
In this case (the coefficient of) $R=R^0_1$ is just $\dbar (1/f)$, and
the Poincar\'e-Lelong formula
\eqref{poin} can be written as\footnote{For an explanation of the relation between the signs
    in \eqref{poin} and \eqref{snodd}, see \cite{LW}*{Section 2.5}, cf.\
  Section \ref{ssect:super}.}
\begin{equation}\label{snodd}
\frac{1}{2\pi i} d\varphi_1 R^0_1=[E].
\end{equation}
Our main result is the following generalization of \eqref{snodd}.
Recall that a coherent sheaf $\F$ has pure dimension $d$ if $\supp
\mathcal \F$ has pure dimension $d$. Given an $\End E_\ell$-valued
current $\alpha$ let $\tr \alpha$ denotes the trace of
$\alpha$.

\begin{thm} \label{thm:main}
    Let $(E,\varphi)$ be a hermitian complex of vector bundles
    \eqref{komplexet} such that all its homology groups $\mathcal{H}_\ell(E)$
    have pure codimension $p > 0$ or vanish,
    and let $D$ be the connection on $\End E$ induced by arbitrary $(1,0)$-connections\footnote{See \eqref{DEnd} for how this connection is defined. } on $E_0,\ldots,E_N$.
    Then
    \begin{equation} \label{eq:main}
        \frac{1}{(2\pi i)^p p!} \sum_{\ell=0}^{N-p}
 (-1)^\ell \tr D\varphi_{\ell+1} \cdots D\varphi_{\ell+p} R^{\ell}_{\ell+p} = [E].
    \end{equation}
\end{thm}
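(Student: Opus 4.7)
The plan is to prove Theorem \ref{thm:main} by a localization argument: reduce to verifying \eqref{eq:main} at a generic smooth point of each component of the cycle, and compute both sides on a Koszul-type local model there.

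First I would observe that both sides of \eqref{eq:main} are $\dbar$-closed currents of bidimension $(n-p,n-p)$ (with $n=\dim X$) supported on the codimension-$p$ set $Z=\bigcup_i Z_i$. For the left-hand side this is because $R^\ell_{\ell+p}$ from \cite{AW1} is supported where $(E,\varphi)$ fails to be exact, which is exactly $\bigcup_\ell \supp \mathcal{H}_\ell(E)\subseteq Z$. By the standard Siu/King extension theorem for normal currents carried by an analytic set of the right dimension, each side has the form $\sum c_i[Z_i]$. It therefore suffices to verify the formula at a generic smooth point $z_0$ of each component $Z_{i_0}$, avoiding the singular locus and all other components.

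Next, I would bring $(E,\varphi)$ into a local normal form at $z_0$. Choose local coordinates $(z',w)=(z_1,\ldots,z_{n-p},w_1,\ldots,w_p)$ with $Z_{i_0}=\{w=0\}$. Since each $\mathcal{H}_\ell(E)$ has pure codimension $p$, it is locally free of some rank $n_\ell$ over $\mathcal{O}_{Z_{i_0}}$ near $z_0$, and by definition $m_{i_0}=\sum_\ell (-1)^\ell n_\ell$. Standard commutative algebra over the regular local ring $\mathcal{O}_{X,z_0}$ (minimality plus Auslander--Buchsbaum) lets one split $(E,\varphi)$ as a direct sum of a pointwise split exact complex and the Koszul model $\bigoplus_\ell K^\bullet(w_1,\ldots,w_p)[\ell]\otimes \mathbb{C}^{n_\ell}$. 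Since the residue current $R$ is additive under direct sums and vanishes on pointwise exact complexes (both properties established in \cite{AW1}), the split exact summand contributes nothing to the left-hand side, and one is reduced to the Koszul model.

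For each shifted summand $K^\bullet(w)[\ell]\otimes \mathbb{C}^{n_\ell}$, the only relevant component of $R$ is $R^\ell_{\ell+p}$, which under the natural trivializations equals the Coleff--Herrera product $\dbar(1/w_1)\wedge\cdots\wedge \dbar(1/w_p)$ tensored with the identity endomorphism of rank $n_\ell$. A direct computation with the Koszul differentials $\varphi_k=\sum_i w_i\,\iota_{e_i^*}$ shows that $\tr D\varphi_{\ell+1}\cdots D\varphi_{\ell+p}$, paired against this current, produces $n_\ell\cdot p!\cdot dw_1\wedge\cdots\wedge dw_p$, the combinatorial factor $p!$ arising from the permutations in the iterated contractions. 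By the classical iterated Poincar\'e--Lelong formula one then gets $n_\ell\,p!\,(2\pi i)^p[Z_{i_0}]$, and summing over $\ell$ with the signs $(-1)^\ell$ and dividing by $(2\pi i)^p p!$ yields $m_{i_0}[Z_{i_0}]$, as required.

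The main obstacle I expect is making the local splitting rigorous in a way compatible with the hermitian metrics used to define $R$ and the connections $D$ used to define $D\varphi$. The algebraic splitting is standard, but the current $R$ depends a priori on the metric, and the connections need not respect the decomposition. Overcoming this requires either adapting the metric and connections to the splitting or invoking invariance properties of $R$ under the relevant change-of-frame/homotopy isomorphisms, as developed in \cite{AW1}. The secondary technical point is the careful sign and combinatorial accounting in the Koszul model computation, which is precisely what accounts for the prefactor $1/((2\pi i)^p p!)$ in \eqref{eq:main}.
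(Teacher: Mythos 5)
Your reduction to generic points via the dimension principle (or an equivalent support argument) is sound and matches the spirit of the paper. The fatal step is the claimed local splitting: that, after discarding a pointwise split exact summand, $(E,\varphi)$ at a generic point of $Z_{i_0}$ is isomorphic to $\bigoplus_\ell K^\bullet(w_1,\dots,w_p)[\ell]\otimes\C^{n_\ell}$. This would amount to formality of $(E,\varphi)$ over $\Ok_{X,z_0}$, i.e.\ that the minimal model of the complex is the direct sum of the shifted minimal resolutions of its homology modules. Minimality plus Auslander--Buchsbaum fix the ranks of the minimal model and the projective dimensions of the homologies, but they do not kill the Postnikov extensions between consecutive homology groups, which live in $\operatorname{Ext}^2_{\Ok}(\mathcal{H}_\ell,\mathcal{H}_{\ell+1})$ and are in general nonzero once $p\geq 2$.

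Concretely, take $p=2$, $w=(w_1,w_2)$ part of a coordinate system at $z_0$, and $M=\Ok/(w_1,w_2^2)$. Multiplication by $w_2$ on $M$ has kernel and cokernel both isomorphic to $\Ok/(w)$, and lifting $\cdot\,w_2\colon M\to M$ to a chain map between copies of the length-$2$ Koszul resolution of $M$ and forming the mapping cone yields a minimal complex
\begin{equation*}
0\to \Ok \to \Ok^3 \to \Ok^3 \to \Ok \to 0
\end{equation*}
with $\mathcal{H}_0=\mathcal{H}_1=\Ok/(w)$ of pure codimension $2$ and $\mathcal{H}_2=\mathcal{H}_3=0$. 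The Yoneda class of $0\to\Ok/(w)\to M\xrightarrow{w_2}M\to\Ok/(w)\to 0$ in $\operatorname{Ext}^2_{\Ok}(\Ok/(w),\Ok/(w))$ is nonzero, so this complex is not quasi-isomorphic to $K^\bullet(w)\oplus K^\bullet(w)[1]$, and a fortiori does not split as you claim, at any point of $\{w=0\}$. Thus the hypotheses of Theorem~\ref{thm:main} are met, the conclusion is true, but your Koszul model does not exist. This is not a technicality that ``choosing good metrics and connections'' (your acknowledged worry, which is in fact handled in the paper by Lemma~\ref{lmaindep} and by choosing split connections) can repair; the direct sum simply is not there.

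The paper's proof circumvents this precisely by replacing direct sums with exact triangles. In Lemma~\ref{lma:bigDiagram} it produces a free resolution $(G,\eta)$ of $\coker\varphi_k$ (agreeing with $(E,\varphi)$ in low degrees) and a complex $(F,\psi)$, essentially the mapping cone of $b\colon(E,\varphi)\to(G,\eta)$, which is a free resolution of the top homology $\mathcal{H}_k(E)$ shifted by $k$. Proposition~\ref{prop:inductionStep} then proves additivity of $(\tr D\varphi\, R^E)_p$ over this triangle, and the induction on the number of nonvanishing homology groups bottoms out at Theorem~\ref{thm:higherrank}, whose proof itself uses a filtration (Propositions~\ref{prop:decomposition} and~\ref{prop:Rses}) rather than a Koszul splitting. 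If you want to keep your localization strategy you would need to establish either the additivity over exact triangles or the quasi-isomorphism invariance of $(\tr D\varphi\, R)_p$ directly, which is exactly the content of Proposition~\ref{prop:inductionStep} and the comparison-formula machinery behind it.
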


Note that the endomorphisms $D\varphi_{\ell+1} \cdots
D\varphi_{\ell+p}$ depend on the choice of connections on $E_0,\ldots,E_N$ and the currents $R^\ell_{\ell+p}$ in general depend on the choice of hermitian
metrics on $E_0,\ldots,E_N$. There is no assumption of any relation
between the connections and the hermitian
metrics.

The proof of Theorem~\ref{thm:main}, which occupies Section
\ref{stiga},
is by induction over the number of nonvanishing homology groups
$\mathcal{H}_\ell(E)$.
The basic case is the special case when $(E,\varphi)$ has
nonvanishing homology only at level $0$.

\begin{thm} \label{thm:higherrank}
    Let $\mathcal{F}$ be a coherent sheaf of pure codimension $p$, let $(E,\varphi)$ be a hermitian
locally free resolution of $\mathcal F$,
    and let $D$ be the connection on $\End E$ induced by arbitrary $(1,0)$-connections on $E_0,\ldots,E_N$.
    Then
    \begin{equation}\label{nordbotten}
        \frac{1}{(2\pi i)^p p!} \tr D\varphi_1 \cdots D\varphi_p R^0_p = [\mathcal{F}].
    \end{equation}
\end{thm}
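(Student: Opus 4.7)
My plan is to reduce \eqref{nordbotten} to a local computation at a generic smooth point of each irreducible component of $\supp\F$ by invoking a dimension principle, and then to check the formula directly by reducing to the Koszul case and applying the iterated Poincar\'e-Lelong formula.

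\emph{Step 1 (Dimension principle).} Both sides of \eqref{nordbotten} are $(p,p)$-currents supported on $\supp\F$, which has pure codimension $p$ by assumption. The left-hand side has this support because $R^0_p$ is supported on $\supp\F$ (since $(E,\varphi)$ is a resolution of $\F$). By the properties of $R^E$ established in \cite{AW1}, the current $R^0_p$ is $\dbar$-closed and has the standard extension property, and hence so does the left-hand side of \eqref{nordbotten}. The usual dimension principle for such currents then forces the left-hand side to be of the form $\sum_i a_i[Z_i]$ for some constants $a_i$, reducing the theorem to the identification $a_i=m_i$ at a single generic smooth point of each $Z_i$.

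\emph{Step 2 (Reduction to Koszul).} Fix a smooth point $z_0$ of $Z_i$ lying outside the other components, and choose coordinates $(z_1,\ldots,z_n)$ with $Z_i=\{z_1=\cdots=z_p=0\}$ near $z_0$. Since $m_i$ is the generic $\Ok_{Z_i}$-rank of $\F$, we have $\F_{z_0}\cong\Ok_{Z_i,z_0}^{\oplus m_i}$. Any locally free resolution of such a free module over the regular local ring $\Ok_{X,z_0}$ is isomorphic, after adding a split-exact trivial complex, to $m_i$ copies of the Koszul complex $K(z_1,\ldots,z_p)$. The split-exact summand contributes nothing either to $R^0_p$ or to the trace appearing in \eqref{nordbotten}, so we may assume $(E,\varphi)=K(z_1,\ldots,z_p)^{\oplus m_i}$ near $z_0$.

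\emph{Step 3 (Iterated Poincar\'e-Lelong).} For the Koszul complex $K(z_1,\ldots,z_p)$ with any hermitian metric, the current $R^0_p$ is the Coleff-Herrera product $\dbar(1/z_p)\wedge\cdots\wedge\dbar(1/z_1)$ tensored with the canonical element of $\Hom(E_0,E_p)$. A direct calculation shows that $D\varphi_1\cdots D\varphi_p$ evaluates on the top basis vector of $E_p$ to $p!\,dz_1\wedge\cdots\wedge dz_p$ in $E_0$, where the factor $p!$ arises from the sum over orderings in the composition of Koszul differentials. For the $m_i$ parallel copies the trace produces an additional factor of $m_i$. Combined with the prefactor $1/((2\pi i)^p p!)$, the iterated Poincar\'e-Lelong formula
\begin{equation*}
\frac{1}{(2\pi i)^p}\,dz_1\wedge\cdots\wedge dz_p\wedge\dbar(1/z_p)\wedge\cdots\wedge\dbar(1/z_1)=[Z_i]
\end{equation*}
then yields $a_i=m_i$, as required.

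\emph{Main obstacle.} The subtlest point is justifying Step 2 rigorously in the presence of arbitrary connections and arbitrary hermitian metrics: one must show that neither the residue current $R^0_p$ nor the trace $\tr D\varphi_1\cdots D\varphi_p R^0_p$ depends on the split-exact trivial summand or on the precise choice of (not necessarily Chern) connection $D$. For $R^0_p$ this should follow from the structural properties of the Andersson-Wulcan construction; for the trace one uses bidegree considerations together with the support properties of the Coleff-Herrera product, so that contributions from the $(0,1)$-part of $D$ and from the trivial summand annihilate against $R^0_p$ at a generic point.
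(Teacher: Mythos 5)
There is a genuine gap in Step~2 of your proposal, and it is precisely the point where the actual difficulty of the theorem lies. At a generic smooth point $z_0$ of $Z_i$, the module $\F_{z_0}$ is indeed isomorphic to $\Ok_{Z_i,z_0}^{\oplus m_i}$ --- but only as an $\Ok_{Z_i,z_0}$-module (via a coordinate inclusion $\Ok_{Z_i,z_0}\hookrightarrow\Ok_{X,z_0}$), \emph{not} as an $\Ok_{X,z_0}$-module. The residue current $R^E$ and the forms $D\varphi_j$ are built from a free resolution of $\F$ over $\Ok_{X,z_0}$, and that resolution is certainly not $m_i$ copies of the Koszul complex $K(z_1,\dots,z_p)$ in general. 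The simplest counterexample is $\F=\Ok_{\C^2}/(z_1^2)$, which is pure of codimension~$1$ with $m_1=\length=2$ along $\{z_1=0\}$, yet its free resolution is $0\to\Ok\xrightarrow{z_1^2}\Ok\to 0$, which is not, even after adding split-exact summands, two copies of $0\to\Ok\xrightarrow{z_1}\Ok\to 0$. So your reduction ``we may assume $(E,\varphi)=K(z_1,\dots,z_p)^{\oplus m_i}$'' is false, and nothing in your argument produces the factor $m_i$ correctly.

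What is true at a generic point is that $\F_{z_0}$ admits a \emph{filtration} with $m_i$ successive quotients isomorphic to $\Ok_{X,z_0}/\mathfrak{p}_{Z_i}$ (a clean prime filtration); it need not \emph{split} as a direct sum. The paper's proof therefore proceeds by (i) establishing the formula for the cyclic module $\Ok/P$ ($P$ prime of codimension $p$), which does reduce to the Koszul computation via the independence-of-resolution lemma (Lemma~\ref{lmaindep}) --- this part essentially matches your Step~3; and (ii) proving that the left-hand side of \eqref{nordbotten} is \emph{additive} in short exact sequences of codimension-$p$ modules (Proposition~\ref{prop:Rses}), which is where the mapping cone and the comparison formula for residue currents enter. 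One then inducts along the filtration. Without an additivity statement like Proposition~\ref{prop:Rses}, the non-split extensions cannot be handled. As a smaller point, your Step~1 overstates what the dimension principle gives: it does not by itself force the left-hand side to be of the form $\sum_i a_i[Z_i]$; it only allows you to ignore what happens on a subvariety of codimension $\geq p+1$, which is how the paper uses it. Your overall strategy could in principle be made to work, but only after replacing Step~2 by a filtration-plus-additivity argument, at which point you have essentially the paper's proof.
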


In \cite{LW} we gave a proof of Theorem~\ref{thm:higherrank} when
$\mathcal{F}$ is the structure sheaf $\Ok_Z$ of an analytic subspace $Z\subset X$ by
comparing $(E,\varphi)$ to a certain universal free resolution due to
Scheja and Storch, \cite{SS}, and Eisenbud, Riemenschneider, and
Schreyer, \cite{ERS}.
That proof should be possible to modify to the setting of a general
$\mathcal F$. However, we give a simpler proof using induction
over a filtration of
$\mathcal F$, see Section \ref{piga}.
Note that in \cite{LW}, the assumption that the connections on $E_0,\dots,E_N$ should be 
$(1,0)$ is missing, see the comment before Lemma~\ref{lmaindep} below.

If $(E,\varphi)$ is the Koszul complex of a tuple of holomorphic
functions $f_1,\ldots, f_m$, then the coefficients of $R$ are the
so-called \emph{Bochner-Martinelli residue currents} introduced by
Passare, Tsikh, and Yger \cite{PTY}, and further developed by
Andersson, \cite{AndIdeals}, see Section ~\ref{ssect:koszul}.
In particular, if $m=p:=\codim Z(f)$, where $Z(f)=\{f_1=\cdots = f_m=0\}$,
(the coefficient of) the only
nonvanishing component $R^0_p$ coincides with the classical
\emph{Coleff-Herrera
  product}
$\dbar (1/f_p)\wedge \cdots \wedge \dbar (1/f_1)$,
introduced by Coleff and Herrera in \cite{CH}, see
\cite{PTY}*{Theorem~4.1} and \cite{AndCH}*{Corollary~3.2}.
In this case \eqref{eq:main} reads
\begin{equation}\label{chpl}
    \frac{1}{(2\pi i)^p}\dbar \frac{1}{f_p} \wedge \cdots \wedge
    \dbar \frac{1}{f_1} \wedge df_1 \wedge \cdots \wedge df_p=[Z].
\end{equation}
This generalization of the Poincar\'e-Lelong formula \eqref{poin} was proved by
Coleff and Herrera
\cite[Section~3.6]{CH}.
If $m>p$, then $[E]=0$ and we can give an alternative proof of Theorem
~\ref{thm:main} by explicitly computing the left-hand side of
\eqref{eq:main}, see Section \ref{koszulsection}.
Since both sides in \eqref{eq:main} are alternating sums,
it would be a natural guess that the terms at respective levels in the
sums coincide. However, this in not true in general and the Koszul
complex provides a counterexample, see Example \ref{strumpa}.

There are various  other special cases of Theorem \ref{thm:higherrank}
and related results
in the literature, see, e.g., the introduction in \cite{LW}.
There are also related cohomological results by Lejeune-Jalabert
and Lejeune-Jalabert-Ang\'eniol, \cite{LJ1, ALJ}. Given a free resolution $(E,\varphi)$ of
$\Ok_{Z,z}$, where $Z$ is a Cohen-Macaulay analytic
space,  Lejeune-Jalabert, \cite{LJ1}, constructed a generalization of the Grothendieck
residue pairing, which can be seen as a cohomological version of
$R^E$, and proved that the fundamental class of $Z$ at $z$ then is
represented by
$D\varphi_1\cdots D\varphi_p$.
In \cite{ALJ} this construction was extended to a residue pairing
associated with a more general complex of free $\Ok_z$-modules and a
cohomological version, \cite[Theorem~I.8.2.2.3]{ALJ}, of Theorem
\ref{thm:main} was given.

In Section \ref{ickeren} we discuss possible extensions of our results
to the case when the homology groups $\mathcal H_\ell(E)$ do not have
pure dimension or are not of the same dimension. In particular,
we present a version of Theorem
\ref{thm:higherrank} for a general, not necessarily pure dimensional,
coherent sheaf $\mathcal F$, generalizing \cite[Theorem~1.5]{LW}.

\section{Preliminaries}

Throughout this article, $(E,\varphi)$ will be a
complex \eqref{komplexet},
where the $E_k$ are either vector bundles on $X$ or germs of free $\Ok$-modules, where $\Ok = \Ok_x = \Ok_{X,x}$
is the ring of germs of holomorphic functions at some $x \in X$.
We will always assume that $E_k = 0$ for $k < 0$ and $k > N$.
Since a complex $(E, \varphi)$ of $\Ok$-modules can be extended to
a vector bundle complex in a neighborhood of $x$ it makes
sense to equip it with hermitian metrics, and thus to talk about a
hermitian complex of $\Ok$-modules.

We let $\E$ and $\E^\bullet$ be the sheaves of smooth functions and
forms, respectively, on $X$. Given a vector bundle $E\to X$ we let
$\E^\bullet(E)=\E^\bullet\otimes \E(E)$ denote the sheaf of
form-valued sections.

\subsection{Signs and superstructure} \label{ssect:super}

As in \cite{AW1}, we will consider the complex $(E,\varphi)$ to be equipped with
a so-called superstructure, i.e., a $\mathbb{Z}_2$-grading, which
splits $E=\oplus E_k$ into odd and even
elements $E^+$ and $E^-$, where $E^+ = \oplus E_{2k}$ and $E^- = \oplus E_{2k+1}$.
Also $\End E$ gets a superstructure by letting the even elements be
the endomorphisms preserving the degree, and the odd elements the endomorphisms switching degrees.

This superstructure affects how form- and current-valued endomorphisms act.
Assume that $\alpha=\omega\otimes \gamma$ is a section of $\E^\bullet
(\End E)$, where $\gamma$ is a holomorphic section of
$\Hom(E_\ell,E_k)$, and $\omega$ is a smooth form of degree $m$.
Then we let $\deg_f \alpha = m$ and $\deg_e \alpha = k-\ell$ denote
the \emph{form} and \emph{endomorphism} degrees, respectively, of $\alpha$.
The \emph{total} degree is $\deg \alpha = \deg_f \alpha + \deg_e \alpha$.
The following formulas, which can be found in \cite{LW}, will be
important to get the signs right in the proofs of the main results.
Assume that $\alpha=\omega\otimes \gamma$ and $\alpha'=\omega'\otimes
\gamma'$ are sections of $\E^\bullet (\End E)$, where $\omega,
\omega'$ are sections of $\E^\bullet$ and $\gamma,\gamma'$ are sections of
$\End E$.
Due to how form-valued endomorphisms are defined to act on form-valued sections, one obtains the following
composition of form-valued endomorphisms, \cite{LW}*{equation (2.2)},
\begin{equation}\label{jord}
\alpha \alpha'=(-1)^{(\deg_e\alpha)(\deg_f\alpha')} \omega\wedge
\omega'\otimes \gamma \gamma'.
\end{equation}
We have the following formula for the trace, see
%Moreover, from
\cite{LW}*{equation (2.14)},
\begin{equation}\label{banal}
\tr(\alpha\alpha')=(-1)^{(\deg\alpha)(\deg\alpha')-(\deg_e\alpha)(\deg_e\alpha')}\tr(\alpha'\alpha).
\end{equation}

If the bundles $E_0,\ldots,E_N$ are equipped with connections
$D_{E_i}$, there is an induced connection
$D_E := \oplus D_{E_i}$ on $E$, which in turn induces a connection $D_{\End}$
on $\End E$, that takes the superstructure into account, through
\begin{equation}\label{DEnd}
D_{\End} \alpha := D_E \circ \alpha - (-1)^{\deg \alpha} \alpha \circ D_E.
\end{equation}
This connection satisfies the following Leibniz rule, \cite{LW}*{equation (2.4)},
\begin{equation}\label{bord}
D_{\End} (\alpha \alpha')=D_{\End} \alpha \alpha' + (-1)^{\deg \alpha}
\alpha D_{\End} \alpha'.
\end{equation}
To simplify notation, we will drop the subscript $\End$ and simply denote this connection by $D$.
All of the above formulas hold also when $\alpha$ and $\alpha'$ are current-valued instead of form-valued,
as long as the involved products of currents are well-defined.

Since $\varphi_{m}\varphi_{m+1}=0$ and the $\varphi_j$ have odd degree, by the Leibniz rule, $\varphi_m D\varphi_{m+1} = D\varphi_m \varphi_{m+1}$,
and using this repeatedly, we get that
    \begin{equation} \label{dal}
        D\varphi_{\ell} \cdots D\varphi_{k-1} \varphi_k =
        \varphi_\ell D\varphi_{\ell+1} \cdots D\varphi_k
    \end{equation}
for all $\ell<k$.

The following result is a slight generalization of
\cite{LW}*{Lemma~4.4}, that follows by the same arguments.

\begin{lma}\label{sniken}
    Let $p$ be fixed. Assume that $(E,\varphi)$ and $(G,\eta)$ are
    complexes of vector bundles
    and that $b : (E,\varphi) \to (G,\eta)$ is a morphism of complexes. Let $D$ be the connection on
    $\End (E\oplus G)$ induced by arbitrary connections on $E_\ell,\dots,E_{\ell+p}$ and $G_\ell,\dots,G_{\ell+p}$,
    and let\footnote{Here $D\eta_{\ell+1} \cdots D\eta_j$ and $D\varphi_{j+1}\cdots D\varphi_{\ell+p-1}$ are to be interpreted as $1$
    if $j = \ell$ and $j=\ell+p-1$, respectively.}
\begin{equation*}
    \begin{gathered}
\delta_\ell:=\sum_{j=\ell}^{\ell+p-1} D\eta_{\ell+1} \cdots D\eta_j D b_jD\varphi_{j+1}\cdots D\varphi_{\ell+p-1}
,\\
\alpha_\ell := \eta_{\ell+1}\delta_{\ell+1}, \quad \beta_\ell := \delta_\ell\varphi_{\ell+p}, \text{ and }
\gamma_\ell :=b_\ell D\varphi_{\ell+1}\cdots D\varphi_{\ell+p}.
    \end{gathered}
\end{equation*}
Then
\begin{equation*}%\label{DetaDpsi}
D\eta_{\ell+1} D\eta_{\ell+2}\cdots D\eta_{\ell+p}b_{\ell+p}=
 \alpha_\ell + \beta_\ell
  + \gamma_\ell.
\end{equation*}
\end{lma}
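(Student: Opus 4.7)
The plan is to proceed by a telescoping argument along the lines of the proof of \cite{LW}*{Lemma~4.4}. The starting point is the chain map relation $b_k \varphi_{k+1} = \eta_{k+1} b_{k+1}$, to which I would apply the connection $D$. Using the Leibniz rule \eqref{bord} together with $\deg b_k = 0$ and $\deg \eta_{k+1} = 1$, this yields the key ``differentiated chain map'' identity
\[
D\eta_{k+1}\, b_{k+1} - b_k\, D\varphi_{k+1} = Db_k\, \varphi_{k+1} + \eta_{k+1}\, Db_{k+1}.
\]

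To interpolate between the two sides of the claimed equality, I would introduce the hybrid expressions
\[
H_j := D\eta_{\ell+1}\cdots D\eta_j\, b_j\, D\varphi_{j+1}\cdots D\varphi_{\ell+p}, \qquad j = \ell,\ldots,\ell+p,
\]
with empty products interpreted as the identity, so that $H_\ell = \gamma_\ell$ and $H_{\ell+p} = D\eta_{\ell+1}\cdots D\eta_{\ell+p}\, b_{\ell+p}$. A direct computation gives
\[
H_{j+1} - H_j = D\eta_{\ell+1}\cdots D\eta_j\, \bigl(D\eta_{j+1} b_{j+1} - b_j D\varphi_{j+1}\bigr)\, D\varphi_{j+2}\cdots D\varphi_{\ell+p},
\]
into whose middle factor I substitute the key identity above.

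The resulting expression splits into two contributions. For the piece $D\eta_{\ell+1}\cdots D\eta_j\, Db_j\, \varphi_{j+1}\, D\varphi_{j+2}\cdots D\varphi_{\ell+p}$, I apply \eqref{dal} to move $\varphi$ from position $j+1$ out to position $\ell+p$, producing exactly the $j$-th summand of $\delta_\ell \varphi_{\ell+p} = \beta_\ell$. For the piece $D\eta_{\ell+1}\cdots D\eta_j\, \eta_{j+1}\, Db_{j+1}\, D\varphi_{j+2}\cdots D\varphi_{\ell+p}$, the analogous identity applied to $\eta$ moves $\eta$ from position $j+1$ out to position $\ell+1$, and after reindexing $j' := j+1$ the resulting sum becomes $\eta_{\ell+1}\delta_{\ell+1} = \alpha_\ell$. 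Summing the telescope over $j = \ell, \ldots, \ell+p-1$ yields $H_{\ell+p} - H_\ell = \alpha_\ell + \beta_\ell$, which rearranges to the claim.

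The main obstacle is sign bookkeeping in the $\mathbb{Z}_2$-graded Leibniz rule: one must verify that the asymmetry between $\deg b_k = 0$ and the odd degree of $\eta_{k+1}$ produces a plus sign on the right-hand side of the differentiated chain map identity, so that the two telescoped contributions combine additively to give $\alpha_\ell$ and $\beta_\ell$ rather than partly cancelling. Once this is pinned down, the remaining manipulations are routine applications of \eqref{dal} and its analog for $\eta$.
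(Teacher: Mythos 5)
Your proof is correct: the differentiated chain-map identity $D\eta_{k+1}b_{k+1} - b_k D\varphi_{k+1} = Db_k\,\varphi_{k+1} + \eta_{k+1}Db_{k+1}$ (with the right signs from \eqref{bord}, since $b_k$ is even and $\eta_{k+1}$ odd), followed by the telescoping of the hybrids $H_j$ and the use of \eqref{dal} and its $\eta$-analogue, yields exactly $\alpha_\ell + \beta_\ell + \gamma_\ell$. The paper does not write out a proof, only citing that it follows by the same arguments as \cite{LW}*{Lemma~4.4}; your argument is the natural one and matches what is intended.
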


%\subsection{Changing signs}\label{epsilon}

%In this article, we will complexes and morphisms between them to construct new complexes.
%In order to get the signs right, we will need to introduce the following
%notation.

Given a complex $(E, \varphi)$, let $(\widetilde E, \tilde \varphi)$ be the complex where the signs are reversed, i.e., let $\widetilde E_k$ be
$E_k$ but with opposite sign and let $\tilde \varphi_k$ be the mapping
$\widetilde E_k\to \widetilde E_{k-1}$ induced by $\varphi_k$. Note
that $\tilde \varphi_k$ is odd.
More generally, for any section $\alpha$ of $\End E$ or $\E^\bullet
(\End E)$, let $\tilde \alpha$ denote the corresponding section of
$\End \widetilde E$ or $\E^\bullet
(\End \widetilde E)$, respectively.
Note that if $\alpha=\omega\otimes \gamma$ is a section of $\E^\bullet(\End
E)$, then $\tilde \alpha=\omega \otimes \tilde \gamma$.

Next, let $\varepsilon:E_k\to \widetilde E_k$ be the map induced by
the identity on $E_k$. Note that $\varepsilon$ is an odd mapping.
If $\gamma$ is a section of $\Hom (E_k,E_{k-1})$, then
\begin{equation}\label{pluto}
\varepsilon\gamma=\tilde\gamma\varepsilon.
\end{equation}
If $\alpha=\omega\otimes \gamma$ is a section of $\E^\bullet (\Hom
(E_k,E_{k-1}))$, then
\begin{equation*}
\varepsilon\alpha=
(-1)^{(\deg_e \varepsilon)(\deg_f\alpha)}\omega\otimes
\varepsilon\gamma
=
(-1)^{\deg_f\alpha}\omega\otimes
\varepsilon\gamma
=
(-1)^{\deg_f\alpha}\omega\otimes
\tilde\gamma\varepsilon;
\end{equation*}
here we have used \eqref{jord} for the first equality, that $\deg_e\varepsilon=1$ for the second
equality, and \eqref{pluto} for the third equality.
Moreover, by \eqref{jord},
\begin{equation*}
\tilde\alpha\varepsilon
=(-1)^{(\deg_e\tilde\alpha)(\deg_f\varepsilon)}\omega\otimes
\tilde\gamma\varepsilon=\omega\otimes
\tilde\gamma\varepsilon,
\end{equation*}
since $\deg_f\varepsilon=0$.
To conclude
\begin{equation}\label{mars}
\varepsilon\alpha=(-1)^{\deg_f\alpha}\tilde\alpha\varepsilon.
\end{equation}

\subsection{Residue currents and the comparison formula} \label{ssect:residue}

We will recall some properties from \cite{AW1} of the residue current $R=R^E$
associated with a hermitian complex $(E,\varphi)$, cf.\ the
introduction.
The part $R^\ell_k=(R^E)^\ell_k$ that takes values in
$\Hom(E_\ell,E_k)$ is a $(0,k-\ell)$-current when $\ell < k$ and
$R^\ell_k = 0$ otherwise.
For us, a key property of the current $R^E$ is that it is
$\nabla_\End$-closed, which means that
\begin{equation} \label{eq:nabla1}
    \varphi_{k+1} R^\ell_{k+1} - R_k^{\ell-1} \varphi_\ell - \dbar
    R_k^\ell = 0,
\end{equation}
 for each $\ell,k$, see \cite[Section~2]{AW1}.

The residue currents $R^E$ are examples of so-called \emph{pseudomeromorphic
currents}, introduced in \cite{AW2}. Another important
example %of pseudomeromorphic currents
is currents of integration %$[Z]$
along subvarieties $Z\subset X$,
as follows, e.g., from \cite[Theorem~1.1]{ALelong}.
The sheaf of pseudomeromorphic currents is closed under multiplication
by smooth forms.
Moreover pseudomeromorphic currents share some properties with normal currents, and in particular they satisfy the following \emph{dimension principle},
\cite{AW2}*{Corollary~2.4}:

\begin{prop}
    Let $T$ be a pseudomeromorphic $(*,p)$-current on $X$, and assume
    that $T$ has support on
    a subvariety $Z\subset X$ of $\codim Z > p$. Then $T = 0$.
\end{prop}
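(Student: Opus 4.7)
The plan is to reduce, via the local structure theorem for pseudomeromorphic currents, to an explicit coordinate model, and then to exploit the order-zero property of such currents.

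Since the statement is local, I would work in a coordinate chart and use the structure theorem to write $T$ as a finite sum of pushforwards $\pi_*\tau$, where $\pi$ is a composition of open inclusions, modifications and simple projections, and $\tau$ is an elementary pseudomeromorphic current on some $\mathbb{C}^N$. In suitable local coordinates, such a $\tau$ has the form
\begin{equation*}
\tau=\bar\partial\frac{1}{z_{1}^{a_1}}\wedge\cdots\wedge\bar\partial\frac{1}{z_{k}^{a_k}}\wedge\frac{\alpha}{z_{k+1}^{b_{1}}\cdots z_{k+m}^{b_{m}}},
\end{equation*}
with $\alpha$ smooth, and its antiholomorphic degree equals the number $k$ of residue factors. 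Tracking bidegrees through $\pi_*$, the hypotheses on $T$ transfer to analogous conditions on each $\tau$, so it suffices to treat the case $T=\tau$ with $k=p$ and $\supp\tau$ contained in an analytic set of codimension $>p$.

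The support of such a $\tau$ is contained in the coordinate plane $V=\{z_1=\cdots=z_p=0\}$, of codimension exactly $p$. Since $\codim Z>p$, the intersection $Z\cap V$ is a proper analytic subset of $V$, so one can pick a holomorphic function $h$ depending only on the transverse coordinates $z_{p+1},\dots,z_N$ that vanishes on $Z$ but not identically on $V$. Pseudomeromorphic currents are of order zero, so the inclusion $\supp\tau\subseteq\{h=0\}$ forces $h\tau=0$. On the other hand, because $h$ is holomorphic in coordinates disjoint from the residue factors, the residue calculus shows that $h\tau$ is just the elementary current obtained by replacing $\alpha$ by $h\alpha$; since $h|_V\not\equiv 0$, this is nonzero whenever $\tau$ is, giving the desired contradiction and hence $\tau=0$.

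The main obstacle I expect is the bookkeeping in the reduction step: one must verify that the admissible pushforwards $\pi_*$ in the definition of pseudomeromorphic currents are compatible with the bidegree and codimension hypotheses, so that the coordinate model inherits an analogous support condition. Once one is in this model, the argument is a short residue-calculus computation together with the order-zero property.
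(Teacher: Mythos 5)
The paper does not prove this proposition but quotes it from \cite{AW2}*{Corollary~2.4}. Your reduction to elementary currents is the step that fails, and it is more than bookkeeping. In the local decomposition $T=\sum_j (\pi_j)_*\tau_j$ that defines a pseudomeromorphic current, the individual pushforwards $(\pi_j)_*\tau_j$ need not have support in $Z$ --- only the sum does, since there can be cancellation between terms --- so the hypothesis $\supp T\subset Z$ does not propagate to the elementary pieces $\tau_j$. Even if it did, $\pi_j^{-1}(Z)$ under a modification can acquire components of small codimension (a whole exceptional divisor, for instance, when the blow-up center lies in $Z$), so the codimension hypothesis on $Z$ is not inherited by $\supp\tau_j$ either. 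A smaller slip: the antiholomorphic degree of an elementary current is at least, not equal to, the number $k$ of residue factors, as the smooth factor $\alpha$ may contribute; one only gets $k\le p$.

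The argument in \cite{AW2} does not reduce the dimension principle itself to the elementary case. Instead one first shows that if $T$ is pseudomeromorphic with $\supp T\subset V$ and $h$ is holomorphic vanishing on $V$, then $\bar hT=0$ and $d\bar h\wedge T=0$; this auxiliary statement does pass cleanly through the sums and pushforwards in the definition, and that is where the reduction to elementary currents actually takes place. The dimension principle then follows by pointwise linear algebra: near a smooth point of a variety of codimension $>p$ there are $p+1$ such $h$'s with pointwise independent $dh_i$, and a $(0,p)$-form annihilated by wedging with $p+1$ independent $(0,1)$-forms must vanish; an induction over singular strata finishes. If you want to pursue your outline, it is this auxiliary annihilation statement, not the dimension principle directly, that should be reduced to elementary currents.
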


If $(E,\varphi)$ is pointwise exact outside a subvariety $Z$ of
codimension $p$ and $k-\ell < p$, since $R$ has support on $Z$,
it follows from the dimension principle that $R^\ell_k = 0$. Then \eqref{eq:nabla1} becomes
\begin{equation} \label{eq:nabla2}
    \varphi_{k+1} R^\ell_{k+1} = R_k^{\ell-1} \varphi_\ell.
\end{equation}
In the special case when $\ell = 0$ and $k=p-1$, since $E_{-1} = 0$, \eqref{eq:nabla2} gives that
\begin{equation}\label{boden}
\varphi_pR_p^0=0.
\end{equation}

If the sheaf complex $(E,\varphi)$ is exact except at $E_0$,
then $R^\ell_k=0$ for $\ell \geq 1$, see \cite[Theorem~3.1]{AW1}. We
can then write without ambiguity $R_k=R^E_k$ for
$R_k^0=(R^E)_k^0$. In
this case, for $\ell=1$ and $k=p$, \eqref{eq:nabla2} reads
\begin{equation}\label{havsbad}
    R_p\varphi_1=0.
\end{equation}

Given a morphism $a : (F,\psi) \to (E,\varphi)$ of complexes of free
$\Ok$-modules or vector bundles, the \emph{comparison formula} from \cite{LComp}
relates the associated residue currents $R^E$ and $R^F$. We begin by recalling an important situation when
one can construct such a morphism, see for example \cite{LComp}*{Proposition~3.1}.
In this result, it is crucial that $(F,\psi)$ and $(E,\varphi)$ are
complexes of free $\Ok$-modules; the corresponding statement would not necessarily be true globally if they were instead complexes
of vector bundles over ~$X$.

\begin{prop} \label{propcomplexcomparison}
    Let $\alpha : A' \to A$ be a homomorphism of $\Ok$-modules, let $(F,\psi)$ be a complex of
    free $\Ok$-modules with $\coker \psi_1 \stackrel{\cong}\rightarrow A'$, and let $(E,\varphi)$ be a free resolution of $A$.
    Then, there exists a morphism $a : (F,\psi) \to (E,\varphi)$ of complexes which extends ~$\alpha$.
\end{prop}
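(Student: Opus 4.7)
The plan is to construct the morphism $a = (a_k)$ level by level by induction on $k$, using that each $F_k$ is free (hence projective) together with the fact that $(E,\varphi)$ is exact in positive degrees and surjects onto $A$ in degree zero.

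For the base case $k=0$, I would first note that $\alpha$ together with the given isomorphism $\coker \psi_1 \cong A'$ gives a composition $F_0 \to \coker \psi_1 \cong A' \xrightarrow{\alpha} A$. Since $(E,\varphi)$ is a free resolution of $A$, the map $E_0 \to A$ is surjective. As $F_0$ is free and thus projective, this composition lifts to a homomorphism $a_0 : F_0 \to E_0$ making the square with the augmentations $F_0 \to A'$ and $E_0 \to A$ commute. To then check the relation needed to continue: composing $a_0 \psi_1 : F_1 \to E_0$ with $E_0 \to A$ gives $\alpha$ times the composition $F_1 \to F_0 \to \coker \psi_1$, which vanishes. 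Thus $\im(a_0 \psi_1) \subset \ker(E_0 \to A) = \im \varphi_1$.

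For the inductive step, assume $a_0,\ldots,a_{k-1}$ have been constructed with $\varphi_j a_j = a_{j-1}\psi_j$ for $1 \leq j \leq k-1$ and (for $k \geq 2$) $\im(a_{k-1}\psi_k) \subset \im \varphi_k$. When $k \geq 2$, the latter follows automatically from the induction hypothesis: $\varphi_{k-1} a_{k-1}\psi_k = a_{k-2}\psi_{k-1}\psi_k = 0$, so $\im(a_{k-1}\psi_k) \subset \ker \varphi_{k-1} = \im \varphi_k$, using exactness of $(E,\varphi)$ at $E_{k-1}$. Now the surjection $\varphi_k : E_k \twoheadrightarrow \im \varphi_k$ together with the projectivity of the free module $F_k$ allows us to lift $a_{k-1}\psi_k : F_k \to \im \varphi_k$ to a homomorphism $a_k : F_k \to E_k$ satisfying $\varphi_k a_k = a_{k-1}\psi_k$. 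The induction terminates since $F_k = 0$ for $k$ large.

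There is no genuine obstacle here; this is the standard comparison theorem from homological algebra, and what makes everything work is precisely the projectivity of the $F_k$ together with exactness of $(E,\varphi)$ past degree zero. The only point that deserves care is the slightly nonstandard starting condition: one is given $\coker \psi_1 \cong A'$ rather than an explicit augmentation $F_0 \to A'$, so one must invoke this isomorphism to produce the map $F_0 \to A$ that initiates the lifting. The remark in the statement that this would fail globally for vector bundles is explained by the fact that projectivity of the $F_k$ is used in every step; locally free sheaves on $X$ are not in general projective objects in the category of $\Ok_X$-modules.
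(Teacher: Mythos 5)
Your proof is correct and is the standard inductive lifting argument from homological algebra (projectivity of the free modules $F_k$ plus exactness of the resolution $(E,\varphi)$ in positive degrees and surjectivity of $E_0 \to A$). The paper does not include a proof of this proposition but simply cites \cite{LComp}*{Proposition~3.1}, which is exactly this classical comparison theorem, so your argument matches the intended one.
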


Here, we say that $a$ \emph{extends} $\alpha$ if the induced map $A'
\stackrel{\cong}{\rightarrow} \coker \psi_1
\stackrel{(a_0)_*}{\longrightarrow} \coker \varphi_1
\stackrel{\cong}{\rightarrow} A$ equals $\alpha$.
The comparison formula in its most general form,
\cite{LComp}*{equation~(3.4)}, states that for
$k > \ell$, there exist pseudomeromorphic $(0,k-\ell-1)$-currents
$M^\ell_k$ with values in $\Hom(F_\ell, E_k)$ and support on the union
$Z$ of the sets where
$(E,\varphi)$ and $(F,\psi)$ are not pointwise exact, such that
\begin{equation*}%\label{cirkus0}
    (R^E)^\ell_k a_\ell = a_k (R^F)^\ell_k + \varphi_{k+1} M^\ell_{k+1} + M_k^{\ell-1} \psi_\ell - \dbar M_k^\ell.
\end{equation*}
Here, $M^{\ell-1}_k$ is to be interpreted as $0$ if $\ell=0$. In all the cases we consider in this article,
we have that $k-\ell \leq \codim Z$.
Then it follows from the dimension principle that $M^\ell_k$ vanishes, since it is a $(0,k-\ell-1)$-current
with support on $Z$ and the comparison formula becomes
\begin{equation}\label{cirkus1}
    (R^E)^\ell_k a_\ell = a_k (R^F)^\ell_k + \varphi_{k+1} M^\ell_{k+1} + M_k^{\ell-1} \psi_\ell.
\end{equation}
Since $M^\ell_k$ takes values in $\Hom(F_\ell,E_k)$ it follows that if
$F_{\ell-1} = 0$, then $M_k^{\ell-1}=0$ and \eqref{cirkus1} reads
\begin{equation}\label{cirkus2}
    (R^E)^\ell_k a_\ell = a_k (R^F)^\ell_k + \varphi_{k+1} M^\ell_{k+1},
\end{equation}
and if in addition $E_{k+1} = 0$, then
\begin{equation}\label{cirkus3}
    (R^E)^\ell_k a_\ell = a_k (R^F)^\ell_k.
\end{equation}

The following result is a corrected version of \cite{LW}*{Lemma~4.1}. In the proof in \cite{LW}*{Lemma~4.1} it was used that the connections
are $(1,0)$-connections, i.e., that the $(0,1)$-part of the connections is $\dbar$, but we missed adding this assumption in the statement of the lemma,
and then consequently in all other results relying on this, i.e., Theorem~1.1, 1.2, 1.5, 6.1 and Lemma 4.2.

\begin{lma} \label{lmaindep}
    Let $M$ be a finitely generated $\Ok$-module of codimension $p$
      and let $(E,\varphi)$ and $(F,\psi)$ be hermitian free resolutions of $M$.
    Then,
    \begin{equation*} %\label{eqindep}
        \tr D\varphi_1 \cdots D\varphi_p R^E_p = \tr D\psi_1 \cdots D\psi_p R^F_p,
    \end{equation*}
    where $D$ is the connection on $\End(E\oplus F)$ induced by arbitrary
    $(1,0)$-connections on $E_0,\dots,E_p$ and $F_0,\dots,F_p$.
\end{lma}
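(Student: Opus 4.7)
The plan is to relate the two trace expressions via comparison morphisms between the two resolutions, combining Proposition~\ref{propcomplexcomparison}, the comparison formula, and Lemma~\ref{sniken}.

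Since $(E,\varphi)$ and $(F,\psi)$ are both free resolutions of $M$, Proposition~\ref{propcomplexcomparison} applied with $\alpha = \Id_M$ produces morphisms $a\colon (F,\psi) \to (E,\varphi)$ and $b\colon (E,\varphi) \to (F,\psi)$, both extending $\Id_M$. Standard homological algebra then supplies chain homotopies between the composites and the identities; the parts we need are $a_0 b_0 = \Id_{E_0} + \varphi_1 t_0$ and $b_0 a_0 = \Id_{F_0} + \psi_1 s_0$ for some $t_0, s_0$. The reduced comparison formula \eqref{cirkus2} applied to $a$ at bidegree $(\ell,k) = (0,p)$ (valid since $F_{-1} = 0$ and $k-\ell = p = \codim M$) yields a pseudomeromorphic current $M^0_{p+1}$ with $R^E_p a_0 = a_p R^F_p + \varphi_{p+1} M^0_{p+1}$.

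Next, Lemma~\ref{sniken} applied to $a$ at $\ell=0$ gives $D\varphi_1 \cdots D\varphi_p a_p = a_0 D\psi_1 \cdots D\psi_p + \varphi_1 \delta_1 + \delta_0 \psi_p$. Right-multiplying by $R^F_p$ kills the last summand via \eqref{boden}, and substituting the comparison identity together with \eqref{dal} gives
\[
D\varphi_1 \cdots D\varphi_p R^E_p a_0 - a_0 D\psi_1 \cdots D\psi_p R^F_p = \varphi_1 X,
\]
where $X := \delta_1 R^F_p + D\varphi_2 \cdots D\varphi_{p+1} M^0_{p+1}$. Post-multiplying by $b_0$, using $a_0 b_0 = \Id_{E_0} + \varphi_1 t_0$ together with $R^E_p \varphi_1 = 0$ from \eqref{havsbad}, collapses the first term to $D\varphi_1 \cdots D\varphi_p R^E_p$. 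Taking trace and cycling $a_0$ by \eqref{banal} (which produces no sign, since $a_0$ and $b_0$ have even total degree), then using $b_0 a_0 = \Id_{F_0} + \psi_1 s_0$ and $R^F_p \psi_1 = 0$, shows that $\tr(a_0 D\psi_1 \cdots D\psi_p R^F_p b_0) = \tr D\psi_1 \cdots D\psi_p R^F_p$. Thus the lemma reduces to proving $\tr(\varphi_1 X b_0) = 0$.

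The main obstacle lies in this final vanishing. Cycling $\varphi_1$ to the right via \eqref{banal} and applying the chain-map identity $b_0 \varphi_1 = \psi_1 b_1$ reduces the task to $\tr(X \psi_1 b_1) = 0$. The $\delta_1 R^F_p \psi_1$ summand vanishes immediately by \eqref{havsbad}. For the remaining summand, the starting point is $\varphi_{p+1} M^0_{p+1} \psi_1 = R^E_p \varphi_1 a_1 - a_p R^F_p \psi_1 = 0$, obtained from the comparison formula and \eqref{havsbad}. Applying the Leibniz rule \eqref{bord} to this vanishing and combining with \eqref{dal} lets one trade $D\varphi_{p+1}$ on the left for $\varphi_{p+1}$ on the right plus a $D$ absorbed inside, which when cycled through the trace becomes (via the chain-map relation $b_{j-1}\varphi_j = \psi_j b_j$) a fresh $\psi_j$-factor. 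Iterating this cascade, each step using $\varphi_j \varphi_{j+1} = 0 = \psi_j \psi_{j+1}$, Leibniz, and chain-map cycling, should telescope the expression into one whose remaining factors vanish by \eqref{havsbad}. Carefully controlling the superstructure signs from \eqref{jord}, \eqref{banal}, and \eqref{bord} throughout this cascade is the technical heart of the argument.
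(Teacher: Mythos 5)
The paper does not prove Lemma~\ref{lmaindep} itself; it cites \cite{LW}*{Lemma~4.1}. So I can only assess your argument on its merits, and there is a genuine gap. Your opening moves are sound: the comparison morphisms $a,b$ from Proposition~\ref{propcomplexcomparison}, the reduced comparison formula \eqref{cirkus2}, Lemma~\ref{sniken} at $\ell=0$, and cycling $a_0$, $b_0$, $\varphi_1$ through the trace with \eqref{banal}, \eqref{havsbad}, \eqref{boden} correctly reduce the lemma to showing $\tr\bigl(D\varphi_2\cdots D\varphi_{p+1}\, M^0_{p+1}\psi_1 b_1\bigr)=0$. But your argument for this final vanishing does not close. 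Writing $T:=M^0_{p+1}\psi_1 b_1$, you correctly observe $\varphi_{p+1}T=0$, and Leibniz gives $D\varphi_{p+1}T=\varphi_{p+1}DT$. Applying \eqref{dal} and cycling $\varphi_2$ around the trace, one then meets $DT\,\varphi_2$; since $T\varphi_2 = M^0_{p+1}\psi_1\psi_2 b_2 = 0$, Leibniz forces the $D$ onto $\varphi_2$ rather than onto a fresh $\psi$-factor: $DT\,\varphi_2 = -TD\varphi_2$. Cycling $D\varphi_2$ back to the front (the sign from \eqref{banal} is $-1$ since $\deg_e(D\varphi_2)=1$ is odd) returns exactly the expression you started with. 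Your ``cascade'' is circular — it only produces the tautology $I=I$ and never manufactures a factor $R^E_p\varphi_1$ or $R^F_p\psi_1$ on which \eqref{havsbad} could bite. At bottom, the obstruction is that the comparison formula between two arbitrary resolutions carries an error term $\varphi_{p+1}M^0_{p+1}$ that you have no mechanism to eliminate.

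The missing idea is to invoke the dimension principle at the start. Both sides of the claimed identity are pseudomeromorphic $(p,p)$-currents supported on $\supp M$, which has codimension $p$, so it suffices to verify the identity outside a subvariety of codimension $p+1$. Outside such a set $M$ is Cohen--Macaulay of codimension $p$ and therefore admits, locally, a hermitian free resolution $(H,\chi)$ of length exactly $p$. Now run your argument with $H$ as the target complex: take morphisms $a\colon(E,\varphi)\to(H,\chi)$ and $c\colon(H,\chi)\to(E,\varphi)$ extending $\Id_M$, with $a_0c_0=\Id_{H_0}+\chi_1 t_0$ and $c_0a_0=\Id_{E_0}+\varphi_1 s_0$. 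Because $H_{p+1}=0$, the comparison formula \eqref{cirkus3} is simply $R^H_p a_0=a_p R^E_p$ with no error term. Lemma~\ref{sniken}, right-multiplication by $R^E_p$ (killing $\delta_0\varphi_p R^E_p$ by \eqref{boden}), substitution of $a_pR^E_p=R^H_pa_0$, right-multiplication by $c_0$ (using $R^H_p\chi_1=0$ and $R^E_p\varphi_1=0$ to absorb the homotopies), and cyclic trace (killing $\tr(\chi_1\delta_1R^E_pc_0)=\tr(\delta_1R^E_p\varphi_1c_1)=0$ via \eqref{havsbad}) then give $\tr D\chi_1\cdots D\chi_p R^H_p=\tr D\varphi_1\cdots D\varphi_p R^E_p$ cleanly. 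The identical computation with $(F,\psi)$ in place of $(E,\varphi)$ completes the proof.
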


\subsection{The Koszul complex, Coleff-Herrera products, and
  Bochner-Martinelli residue currents} \label{ssect:koszul}

Let $f = (f_1,\ldots,f_m)$ be a tuple of holomorphic functions on $X$
and let $(E,\varphi)$ be the Koszul complex of $f$, i.e., consider
$f$ as a section $f=\sum f_j e_j^*$ of a trivial rank $m$ bundle $F^*$
with a frame $e_1^*,\ldots, e_m^*$, let $E_j=\bigwedge^j F$, where $F$
is the dual bundle of $F^*$, and let
$\varphi_k=\delta_f$ be contraction with $f$.

The residue current $R$ associated with the Koszul complex
$(E,\varphi)$
equipped with hermitian metrics induced by a hermitian metric on $F^*$,
was introduced
and studied by Andersson in \cite{AndIdeals}.
For $E_k$, we have the frame $\{ e_I = e_{i_1} \wedge \dots \wedge
e_{i_k} \mid I = (i_1,\dots,i_k), 1 \leq i_1 < \dots < i_k \leq m \}$,
where $e_1,\ldots, e_m$ is the dual frame of $e_1^*,\ldots, e_m^*$,
and in particular, $e_\emptyset$ is a frame for $E_0$.
In this frame we can write $R^0_p = \sum R_I \wedge
e_I \wedge e_\emptyset^*$. To get the superstructure right, in
\cite{AndIdeals} it is convenient to consider the endomorphism-valued
currents and forms that appear in the construction of $R$ as sections
of the exterior algebra of $F^*\oplus T^*_{0,1}$, i.e., with the
convention that $dz_j \wedge e_k =-e_k\wedge dz_j$
etc. If the metric on $F^*$ is trivial, then the coefficients $R_I$
coincide with the Bochner-Martinelli residue currents from \cite{PTY}.

In the case when $m = p=\codim Z(f)$,
so that the ideal $\J(f)$ generated by $f$ is a complete intersection, then the Koszul complex
is a locally free resolution
of $\Ok_Z:=\Ok/\J(f)$. Since $R^\ell_k = 0$ for $k-\ell < p$ by the
dimension principle, in this case $R = R^0_p$ and $R^0_p$ consists of only one
component, $R_{\{1,\ldots,p\}} \wedge e_1\wedge \cdots \wedge e_p$, % e_{\{1,\ldots,p\}}$,
where  $R_{\{1,\ldots,p\}} =\dbar (1/f_p)\wedge\cdots\wedge \dbar (1/f_1)$,
cf.\ the introduction.
Moreover, $D\varphi_j$ is contraction with $\sum df_j \wedge
e_j^*$ and it follows that
\begin{equation}\label{disputation}
 D\varphi_1 \cdots D\varphi_p = p! df_1 \w \cdots \w df_p\wedge
 e_p^*\wedge\cdots\wedge e_1^*.
\end{equation}
Therefore, the generalized Poincar\'e-Lelong formula \eqref{chpl} by Coleff
and Herrera can be rewritten as
    \begin{equation}\label{elefant}
        \frac{1}{(2\pi i)^p p!} D\varphi_1 \cdots D\varphi_p R^E_p = [Z].
    \end{equation}
For an explanation of the signs when going from endomorphism-valued
currents to scalar-valued currents, see
\cite[Section~2.5]{LW} and also Section~\ref{ssect:super} above.
%This is also a special case of \cite[Theorem~1.1]{LW}.

\subsection{The mapping cone of a morphism of complexes}

Let $c : (L,\lambda) \to (K,\kappa)$ be a morphism of complexes. The \emph{mapping cone} of $c$ is the complex
$(C,\mu)$ given by $C_k = K_k \oplus \widetilde L_{k-1}$ for $k \geq 1$ and $C_0 = K_0$, with
\begin{equation*}%\label{korset}
    \mu_k = \left[ \begin{array}{cc} -\kappa_k  & c_{k-1}
                                                  \varepsilon^{-1} \\
                     0 & \tilde{\lambda}_{k-1} \end{array}\right]  \text{ for $k \geq 2$ and }
    \mu_1 = \left[ \begin{array}{cc} -\kappa_1 & c_0 \varepsilon^{-1} \end{array}\right].
\end{equation*}
Here, the bundles and morphisms take into account the signs and superstructure from Section \ref{ssect:super}.
Let
\begin{equation} \label{eq:thetaMC}
    \theta_k : K_k \to C_k \text{, }
    \theta_k = \left[ \begin{array}{c} (-1)^k \Id_{K_k} \\ 0 \end{array}\right] \text{ for $k \geq 1$, }
    \theta_0 = \left[ \begin{array}{c} \Id_{K_0} \end{array}\right]
\end{equation}
and
\begin{equation} \label{eq:varthetaMC}
    \vartheta_k : C_{k+1} \to L_k \text{, }
    \vartheta_k =  \left[\begin{array}{cc} 0 &
                                               \varepsilon^{-1} \end{array} \right] \text{ for $k \geq 0$}.
\end{equation}
Then $\theta : (K,\kappa) \to (C,\mu)$ and $\vartheta : (C,\mu) \to (L,\lambda)$ are morphisms of complexes
(the latter of degree $-1$).
From this construction one obtains, cf., e.g., \cite{Weibel}*{Chapter 1.5}, an induced long exact sequence
\begin{equation} \label{eq:les-mc}
    \cdots \to H_{k+1}(C) \stackrel{\vartheta_k}{\longrightarrow} H_k(L) \stackrel{c_k}{\longrightarrow} H_k(K) \stackrel{\theta_k}{\longrightarrow} H_k(C) \stackrel{\vartheta_{k-1}}{\longrightarrow} H_{k-1}(L) \to \cdots.
\end{equation}

\begin{prop} \label{prop:mc-ses}
    Let $$0 \to A' \stackrel{\alpha}{\to} A \stackrel{\beta}{\to} A'' \to 0$$
    be a short exact sequence of $\Ok$-modules.
   % Take free resolutions
Assume that $(E,\varphi)$ and $(F,\psi)$ are free resolutions of $A$ and $A'$,
respectively, and that
$a : (F,\psi) \to (E,\varphi)$ is a morphism of complexes extending
$\alpha$.
%. , which exists
%    by Proposition~\ref{propcomplexcomparison}.
    Let $(G,\eta)$ be the mapping cone of $a$ and let $b : (E,\varphi) \to (G,\eta)$ be the
    morphism $\theta$ as defined by \eqref{eq:thetaMC}.
    Then there is an isomorphism $H_0(G) \stackrel{\cong}{\to} A''$, which makes
    $(G,\eta)$ a free resolution of $A''$ and such that $b$ extends $\beta$.
\end{prop}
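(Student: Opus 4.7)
The plan is to compute the homology of the mapping cone $(G,\eta)$ directly from the long exact sequence \eqref{eq:les-mc} applied with $(K,\kappa) = (E,\varphi)$, $(L,\lambda) = (F,\psi)$, and $c = a$, and then to identify $H_0(G)$ with $A''$. Since $(E,\varphi)$ and $(F,\psi)$ are free resolutions of $A$ and $A'$, one has $H_k(E) = H_k(F) = 0$ for $k \geq 1$, $H_0(E) \cong A$ and $H_0(F) \cong A'$; and under these identifications the induced map $a_* : H_0(F) \to H_0(E)$ coincides with $\alpha$, which is precisely the hypothesis that $a$ extends $\alpha$.

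For the vanishing of $H_k(G)$ in positive degrees I would treat three cases. For $k \geq 2$ the relevant segment of \eqref{eq:les-mc} sandwiches $H_k(G)$ between $H_k(E) = 0$ and $H_{k-1}(F) = 0$, so $H_k(G) = 0$. For $k = 1$ the exact piece
\begin{equation*}
0 = H_1(E) \to H_1(G) \to H_0(F) \xrightarrow{\alpha} H_0(E),
\end{equation*}
combined with the injectivity of $\alpha$, forces $H_1(G) = 0$. For $k = 0$ the tail
\begin{equation*}
A' \xrightarrow{\alpha} A \xrightarrow{(\theta_0)_*} H_0(G) \to 0
\end{equation*}
exhibits $(\theta_0)_*$ as surjective with kernel $\alpha(A') = \ker \beta$, so composing the induced isomorphism $A/\alpha(A') \xrightarrow{\cong} H_0(G)$ with the isomorphism $A/\alpha(A') \xrightarrow{\cong} A''$ coming from $\beta$ produces an isomorphism $H_0(G) \xrightarrow{\cong} A''$. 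Since each $G_k = E_k \oplus \widetilde F_{k-1}$ is free, this makes $(G,\eta)$ a free resolution of $A''$.

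For the final assertion, observe that $b_0 = \theta_0 = \Id_{E_0}$, so the map $A \cong \coker \varphi_1 \to \coker \eta_1 = H_0(G)$ induced by $b_0$ is the natural projection $[x] \mapsto [x]$, which under the isomorphism $H_0(G) \cong A''$ constructed above corresponds to $\beta(x)$; hence $b$ extends $\beta$. The only step requiring any real care will be verifying that the map $c_0$ appearing in \eqref{eq:les-mc} indeed agrees with $\alpha$ under the standard identifications $H_0(F) = \coker \psi_1 \cong A'$ and $H_0(E) = \coker \varphi_1 \cong A$; this unpacks directly to the definition of ``$a$ extends $\alpha$'' given after Proposition \ref{propcomplexcomparison}, so no new input is needed.
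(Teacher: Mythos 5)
Your proposal is correct and follows essentially the same route as the paper: both derive from the long exact sequence \eqref{eq:les-mc} the exactness of $0 \to H_1(G) \to H_0(F) \xrightarrow{\alpha} H_0(E) \to H_0(G) \to 0$, use injectivity of $\alpha$ to kill $H_1(G)$, identify $H_0(G) \cong A/\im\alpha \cong A''$ via $\beta$, and note that $b$ extends $\beta$ because $b_0 = \theta_0 = \Id_{E_0}$. Your write-up merely spells out the $k \geq 2$, $k=1$, $k=0$ cases a bit more explicitly than the paper does.
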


\begin{proof}
    By \eqref{eq:les-mc}, we obtain that $H_\ell(G) = 0$ for $\ell \neq 0,1$ and the exact sequence
    \begin{equation*}
        0 \to H_1(G) \to H_0(F) \to H_0(E) \to H_0(G) \to 0.
    \end{equation*}
    The morphism $H_0(F) \to H_0(E)$ equals the morphism
    $A' \stackrel{\alpha}{\to} A$ which is injective, so $H_1(G) = 0$.
    Thus $(G,\eta)$ is a free resolution of $A/(\im \alpha)$. Since $\beta$ gives
    an isomorphism $A/(\im \alpha) = A/(\ker \beta) \stackrel{\cong}{\to} \im \beta = A''$,
    $(G,\eta)$ is a free resolution of $A''$.
    By construction, $b$ extends the morphism $A \to A/(\im \alpha)$.
\end{proof}

\section{Proof of Theorem~\ref{thm:higherrank}}\label{piga}

The proof of Theorem~\ref{thm:higherrank} is by induction.
The induction procedure is achieved through the following filtration
of a module, see, e.g., \cite{Bou}*{\S1.4, Th\'eor\`emes 1 and 2 and \S2.5, Remarque 1} or \cite{Stacks}*{Tag 00KY},
that is sometimes referred to as a \emph{prime filtration}.

\begin{prop} \label{prop:decomposition}
    Let $M$ be a finitely generated $\Ok$-module. Then there exists a sequence of submodules
    \begin{equation}\label{sommaren}
        0 = M_0 \subset M_1 \subset  \cdots \subset M_m = M
    \end{equation}
    such that
    \begin{equation} \label{eq:simpleQuotient}
        M_i/M_{i-1} \cong \Ok/P_i,
    \end{equation}
where $P_i \subseteq \Ok$ is a prime ideal contained in
$\supp M$ for $i=1,\ldots,m$.
    The minimal prime ideals $P_i$ (with respect to inclusion) appearing in \eqref{eq:simpleQuotient} are exactly
    the minimal associated primes of $M$, and each such minimal prime $P$ occurs exactly $\length_{\Ok_P} M_P$
    times.
\end{prop}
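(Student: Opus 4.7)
The plan is to construct the filtration by Noetherian induction and to read off the multiplicity statement by localizing at each minimal prime. The workhorse is the standard fact that every nonzero finitely generated module $N$ over the Noetherian local ring $\Ok$ contains a submodule isomorphic to $\Ok/P$ for some prime $P \subseteq \Ok$: choose $0 \neq x \in N$ whose annihilator $\ann(x)$ is maximal among annihilators of nonzero elements of $N$ (such a maximum exists by Noetherianity), and a short product argument shows this $\ann(x)$ is prime. The submodule $\Ok x \subseteq N$ is then isomorphic to $\Ok/\ann(x)$.

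Starting from $M_0 = 0$, I would iterate this construction. Given $M_i \subsetneq M$, apply the preceding fact to $N = M/M_i$ to obtain a prime $P_{i+1}$ and a submodule of $M/M_i$ isomorphic to $\Ok/P_{i+1}$; lift this submodule to $M_{i+1} \subseteq M$ with $M_{i+1}/M_i \cong \Ok/P_{i+1}$. Since $M$ is Noetherian, the strictly ascending chain $M_0 \subset M_1 \subset \cdots$ must stabilize after finitely many steps, and this stabilization can only occur at $M_m = M$. To verify $P_i \in \supp M$, observe that $\ann(M) \subseteq \ann(M/M_{i-1}) \subseteq P_i$, and that $\supp M = V(\ann M)$ for a finitely generated module.

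For the multiplicity statement, fix a minimal prime $P \in \supp M$ and localize the filtration at $P$. Each quotient $(M_i/M_{i-1})_P \cong (\Ok/P_i)_P$ vanishes unless $P_i \subseteq P$; since $P_i \in \supp M$ and $P$ is minimal in $\supp M$, the only possibility is $P_i = P$, in which case the quotient is the residue field $\kappa(P)$, of length one over $\Ok_P$. Additivity of length in short exact sequences then gives $\length_{\Ok_P}(M_P) = \#\{i : P_i = P\}$. Finally, for a finitely generated module over a Noetherian ring the minimal associated primes coincide with the minimal elements of $\supp M$, and the counting above shows these are exactly the minimal $P_i$ occurring in the filtration.

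The main obstacle is not any single hard step but rather the bookkeeping: one needs existence of an element with prime annihilator, the identity $\supp M = V(\ann M)$, exactness of localization, and additivity of length, combined so that the multiplicity count isolates precisely the indices with $P_i = P$. With these standard tools in hand no delicate geometric argument is needed.
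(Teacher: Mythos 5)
Your proof is correct and is the standard argument (Noetherian induction to find an element with prime annihilator, then localize at a minimal prime and use additivity of length). The paper gives no proof of its own, citing Bourbaki and the Stacks Project, and your argument is precisely the one found in those references.
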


In general, also primes $P_i$ appear in \eqref{eq:simpleQuotient} that are not minimal primes of $M$, as in the following example.
If only the minimal primes of $M$ appear, then the filtration is said
to be a \emph{clean}, cf.\ \cite{Dress}.

\begin{ex}
    Let $\mathcal{J} = \mathcal{J}(xz,xw,yz,yw) \subseteq \Ok_{\C^4}$, which is the ideal generating the variety $\{ x = y = 0 \} \cup \{ z = w = 0 \}$.
    If we let
    \begin{equation*}
        \mathcal{I}_0 = \Ok, ~\mathcal{I}_1 = \mathcal{J}(x,y,w),
        ~\mathcal{I}_2 = \mathcal{J}(xz, y, w), ~\mathcal{I}_3 = \mathcal{J}(xz,xw,y), ~\mathcal{I}_4 = \mathcal{J},
    \end{equation*}
    then $M_j :=\Ok/\mathcal{I}_j$ for $j=1,\ldots, 4$, is a prime filtration of
    $M:=\Ok/\mathcal{J}$. Indeed, $M_j/M_{j-1}\cong
    \mathcal{I}_{j-1}/\mathcal{I}_{j} \cong \Ok/P_j$, where
    \begin{equation*}
        P_1  = \mathcal{J}(x,y,w), ~P_2 = \mathcal{J}(y,z, w), ~P_3
        \cong \mathcal{J}(x,y), ~P_4 \cong \mathcal{J}(z,w).
    \end{equation*}
    Note that $P_3$ and $P_4$ are the two (minimal) associated primes
    of $M$,
which have codimension $2$,
    while $P_1$ and $P_2$ are of codimension $3$ and contained in
    the support of $M$ but not associated primes of $M$.
\end{ex}

\begin{cor} \label{cor:clean}
    Let $\mathcal{F}$ be a coherent sheaf of codimension $p$
%on a complex manifold $X$,
    and let $z_0 \in \supp \mathcal{F}$.
    For $z$ in a neighborhood of $z_0$, outside a subvariety of
    positive codimension in $\supp \mathcal F$,
    $\mathcal{F}_z$ has a clean filtration where all the modules in the filtration have pure codimension ~$p$.
\end{cor}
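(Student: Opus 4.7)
The strategy is to bypass the combinatorics of an abstract prime filtration by exploiting the geometric description of the multiplicity $m_i$ recalled in the introduction: on an open dense subset of each codimension-$p$ component $Z_i$ of $\supp \mathcal F$, the sheaf $\mathcal F$ is locally free over $\Ok_{Z_i}$ of rank $m_i$. At such a generic point the desired clean filtration becomes tautological, inherited from the coordinate subfiltration of a free $\Ok_{Z_i}$-module of rank $m_i$.

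First I would shrink to a neighborhood $U$ of $z_0$ on which $\supp \mathcal F$ has only finitely many irreducible components, split as $Z_1,\dots,Z_r$ of codimension exactly $p$ and $Z_{r+1},\dots,Z_N$ of codimension $>p$. Define the exceptional set $V \subseteq \supp \mathcal F \cap U$ to be the union of (i) the codimension-$>p$ components $Z_{r+1},\dots,Z_N$; (ii) the pairwise intersections $Z_i\cap Z_j$ for $1\le i<j\le r$; (iii) the singular locus of each $Z_i$ with $1\le i\le r$; and (iv) the closed analytic subset of each $Z_i$ where $\mathcal F$ fails to be locally $\Ok_{Z_i}$-free of rank $m_i$. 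Each of these four pieces is either already of codimension $>p$ in $X$ or a proper analytic subset of some codimension-$p$ component $Z_i$, so $V$ has positive codimension in $\supp \mathcal F$. If no codimension-$p$ component passes through $z_0$ then $r=0$, $V=\supp \mathcal F\cap U$, and the conclusion is vacuous (for $z \notin \supp \mathcal F$ the empty filtration trivially works).

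Second, for any $z\in U\setminus V$ with $z\in \supp \mathcal F$, exactly one component passes through $z$, namely some $Z_i$ with $1\le i\le r$; moreover $Z_i$ is smooth at $z$ and $\mathcal F_z$ is $\Ok_{Z_i,z}$-free of rank $m_i$. Since $Z_i$ is smooth and irreducible near $z$, the ideal $P:=\mathcal{I}_{Z_i,z}\subset\Ok_{X,z}$ is prime, and it is the unique minimal associated prime of $\mathcal F_z$. Choosing any $\Ok_{Z_i,z}$-basis of $\mathcal F_z\cong(\Ok_{X,z}/P)^{m_i}$ and letting $M_j$ be the submodule generated by the first $j$ basis vectors yields a filtration $0=M_0\subset M_1\subset\cdots\subset M_{m_i}=\mathcal F_z$ with $M_j/M_{j-1}\cong \Ok_{X,z}/P$ for every $j$. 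This is a prime filtration in the sense of Proposition \ref{prop:decomposition}, all of whose subquotients have pure codimension $p$; since $P$ is the only minimal associated prime of $\mathcal F_z$, it is automatically clean. The argument is essentially routine once the correct $V$ is set up, and the only point requiring any care is checking that each of the four pieces of $V$ has positive codimension in $\supp \mathcal F$, which holds term by term using the standard facts that the smooth locus and the locally-free locus are both open and dense in each~$Z_i$.
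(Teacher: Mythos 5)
There is a genuine gap, in the step where you write ``$\mathcal F_z\cong(\Ok_{X,z}/P)^{m_i}$'' and then build the filtration from coordinate submodules of that free module. The isomorphism is false in general, even at \emph{every} point of $Z_i$. Take $\mathcal F=\Ok_{\C^2}/\mathcal J(x^2)$, so that $\supp\mathcal F=Z_1=\{x=0\}$ and $m_1=2$: for every $z\in Z_1$, the annihilator of $\mathcal F_z$ is $(x^2)$, whereas the annihilator of $(\Ok_{X,z}/(x))^2$ is $(x)$, so the two $\Ok_{X,z}$-modules are never isomorphic. The statement from the introduction you invoke --- that at generic $z$ the stalk $\mathcal F_z$ can be given the structure of a free $\Ok_{Z_i,z}$-module of rank $m_i$ --- refers to a module structure coming from a local embedding $\Ok_{Z_i,z}\hookrightarrow\Ok_{X,z}$ (a Noether normalization of $\supp\mathcal F$ over $Z_i$), not from the quotient map $\Ok_{X,z}\twoheadrightarrow\Ok_{X,z}/P$; these two agree only when $P$ annihilates $\mathcal F_z$, i.e.\ only when $\mathcal F$ carries no nilpotents along $Z_i$. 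Absent that, the $\Ok_{Z_i,z}$-span of your first $j$ basis vectors need not be an $\Ok_{X,z}$-submodule at all (in the example, $x\cdot 1\notin\Ok_{Z_1,z}\cdot 1$ inside $\Ok_{X,z}/(x^2)$), so you do not obtain a filtration of $\Ok_{X,z}$-modules. For the same reason the locus (iv) you throw into $V$ can be all of $Z_i$ rather than a proper subset, and then $V$ is not a proper subvariety of $\supp\mathcal F$.

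The paper sidesteps this by never attempting to split $\mathcal F_z$: it fixes a prime filtration of $\mathcal F_{z_0}$ via Proposition~\ref{prop:decomposition}, extends the filtration modules to a neighborhood, passes to the complement of the zero sets of the primes of codimension $\geq p+1$, and checks that after discarding the steps that become trivial one is left with a clean filtration of $\mathcal F_z$ whose modules have pure codimension $p$. In the example above the relevant filtration is $0\subset(x)/(x^2)\subset\Ok/(x^2)$, with both graded pieces isomorphic to $\Ok/(x)$, even though $\Ok/(x^2)\not\cong(\Ok/(x))^2$; only the graded pieces need to be of the form $\Ok/P$, not the filtration itself split. Your strategy of reducing to a free module requires precisely the kind of reducedness that a general coherent sheaf of codimension $p$ does not have, which is why the prime-filtration machinery is not so easily bypassed here.
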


\begin{proof}
Take a filtration of $\mathcal{F}_{z_0}$ as in Proposition
\ref{prop:decomposition}
and choose a neighborhood $z_0\in \mathcal U\subset X$ such that
all $M_i$ are defined in $\mathcal U$. Moreover let $W$ be the union of the varieties of the $P_i$
that have codimension $\geq p+1$.
Take $z\in \mathcal U\setminus W$. For each $i$, at $z$, either $M_{i+1}=M_i$ or
$M_{i+1}/M_i\cong \Ok/P_j$ for some $j$, where $P_j$ is an associated prime of $\mathcal{F}_z$ of codimension $p$. Thus if we remove the $M_i$ such that
$M_{i+1}=M_i$ we are left with a clean filtration of $\mathcal F_z$.
Since the sequence $0 \subset M_0 \subset \cdots \subset M_k$ gives a filtration of $M_k$,
by Proposition~\ref{prop:decomposition}, the only minimal primes of $M_k$ are $P_i$ for $i=1,\ldots,k$,
and thus $M_k$ has pure codimension $p$ for $k=1,\ldots,m$.
\end{proof}

%The following result is used to prove the induction step in the
%proof of Theorem~\ref{thm:higherrank}.

\begin{lma} \label{lma:basiccase}
Let $P\subset \Ok$ be a prime ideal of codimension $p$ and let
$(E,\varphi)$ be a hermitian free resolution of $\Ok/P$. %, and let $Z = Z(P)$.
    Then
    \begin{equation}\label{eq:basiccase}
        \frac{1}{(2\pi i)^p p!}\tr D\varphi_1 \cdots D\varphi_p R_p^E = [\Ok/P].
    \end{equation}
\end{lma}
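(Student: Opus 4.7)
The plan is to recognize the prime ideal case as the structure-sheaf case already settled in \cite{LW}. Since $P\subset \Ok$ is a prime ideal of codimension $p$, the analytic germ $Z := V(P)$ is reduced and irreducible of codimension $p$, and $\Ok/P$ is precisely the structure sheaf $\Ok_Z$. So the content of the lemma is exactly Theorem~\ref{thm:higherrank} for the special coherent sheaf $\Ok_Z$.

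Before invoking that, I would identify the right-hand side of \eqref{eq:basiccase}. The support of $\Ok/P$ consists of the single irreducible variety $Z$, and the geometric multiplicity of $\Ok/P$ along $Z$ is
\begin{equation*}
    \length_{\Ok_P}\bigl((\Ok/P)_P\bigr) = 1,
\end{equation*}
because $(\Ok/P)_P \cong \Ok_P/P\Ok_P$ is a field. Therefore $[\Ok/P] = [Z]$ as cycles.

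For the analytic identity, I would invoke the main result of \cite{LW}, which establishes the formula \eqref{eq:basiccase} when $\mathcal F$ is the structure sheaf of an analytic subspace; in particular, for $\mathcal F = \Ok_Z$ there exists at least one hermitian free resolution $(E',\varphi')$ (the one built from the Scheja--Storch and Eisenbud--Riemenschneider--Schreyer universal construction) such that
\begin{equation*}
    \frac{1}{(2\pi i)^p p!}\tr D\varphi'_1 \cdots D\varphi'_p R^{E'}_p = [Z].
\end{equation*}
To promote this to the arbitrary hermitian free resolution $(E,\varphi)$ of $\Ok/P$ given in the lemma (and to allow arbitrary connections and metrics), I would apply Lemma~\ref{lmaindep}, which guarantees that $\tr D\varphi_1\cdots D\varphi_p R^E_p$ depends only on the underlying module and not on the chosen resolution, metrics, or connections. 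Combining the two identities with the cycle identification $[\Ok/P] = [Z]$ yields \eqref{eq:basiccase}.

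Since the two inputs are already available in this paper (Lemma~\ref{lmaindep}) and in \cite{LW} respectively, no substantial obstacle arises; the only point that needs verification is the multiplicity computation $[\Ok/P] = [Z]$, which is immediate from the fact that $P$ is prime.
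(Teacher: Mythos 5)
Your argument is logically sound, but it takes a noticeably different route from the paper's own proof, and one that somewhat works against the purpose of this section. The paper's proof of Lemma~\ref{lma:basiccase} is self-contained: it observes that both sides of \eqref{eq:basiccase} are pseudomeromorphic $(p,p)$-currents supported on $Z = V(P)$, invokes the dimension principle to reduce to verifying the identity over $Z_{\rm reg}$ (since $Z_{\rm sing}$ has codimension $\geq p+1$), chooses local coordinates so that $Z = \{z_1 = \dots = z_p = 0\}$, uses Lemma~\ref{lmaindep} to replace the given resolution with the Koszul complex of $z_1,\dots,z_p$, and then applies the Coleff--Herrera generalized Poincar\'e--Lelong formula \eqref{elefant}. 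Your proof instead cites the main theorem of \cite{LW} (for the structure sheaf $\Ok_Z$) wholesale and then also uses Lemma~\ref{lmaindep} to pass to the given resolution, together with the (correct and easy) observation that $[\Ok/P]=[Z]$ since $(\Ok/P)_P$ has length one over $\Ok_P$.

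Both proofs rely on Lemma~\ref{lmaindep}, so the real difference is which basic input supplies the current identity: you use the full result of \cite{LW}, whereas the paper uses only the Coleff--Herrera formula for the Koszul complex after localizing via the dimension principle. The paper explicitly sets out, in the paragraph following Theorem~\ref{thm:higherrank}, to give a proof \emph{avoiding} the Scheja--Storch/Eisenbud--Riemenschneider--Schreyer comparison on which \cite{LW} rests; your proof reintroduces exactly that dependence. It is not wrong to do so, but it makes the present Lemma~\ref{lma:basiccase} stop being a ``base case'' in the intended sense, since you prove it by importing the very argument the paper is trying to replace. If you want your proof to stand on its own within this paper's framework, you should use the dimension-principle-plus-Koszul reduction, which also makes it transparent why only the regular locus of $Z$ matters. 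One small point you should make explicit if you keep your version: after reducing to some fixed hermitian resolution via Lemma~\ref{lmaindep}, you still need to know that the formula from \cite{LW} holds for $\Ok_Z$ with $Z$ an arbitrary irreducible reduced germ, not just for Cohen--Macaulay $Z$; this is covered by the statement the authors cite, but it is worth saying.
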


\begin{proof}
Since both sides of \eqref{eq:basiccase} are pseudomeromorphic
$(p,p)$-currents with support on the variety $Z$ of $P$,
it is by the dimension principle enough to prove that
\eqref{eq:basiccase} holds locally on $Z_{\rm reg}$.
We may thus assume that we have local coordinates $(z_1,\ldots,z_n)$ such that $Z = \{ z_1 = \cdots = z_p = 0 \}$.
Since the left-hand side of \eqref{eq:basiccase} is independent of the
choice of locally free resolution $(E,\varphi)$ by Lemma~\ref{lmaindep}, we can
assume that $(E,\varphi)$ is the Koszul complex of $z_1,\ldots, z_p$.
In this case, it follows from \eqref{elefant} % Lemma~\ref{lma:koszul}
that the left-hand side of \eqref{eq:basiccase} equals $[z_1 = \dots = z_p = 0 ] = [Z]=[\Ok/P]$.
\end{proof}

\begin{prop} \label{prop:Rses}
    Let
    \begin{equation*} %\label{eq:ses}
        0 \to A' \to A \to A'' \to 0
    \end{equation*}
    be an exact sequence of $\Ok$-modules of codimension $p$, and let
    $(E,\varphi)$, $(F,\psi)$, and $(G,\eta)$ be hermitian free resolutions of $A$, $A'$, and $A''$, respectively.
    Then
    \begin{equation} \label{eq:curSes}
        \tr D\varphi_1\cdots D\varphi_p R^E_p = \tr D\psi_1 \cdots D\psi_p R^F_p + \tr D\eta_1 \cdots D\eta_p R^G_p.
    \end{equation}
\end{prop}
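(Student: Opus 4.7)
The plan is to use Lemma~\ref{lmaindep} to reduce to a convenient choice of free resolution of $A''$, and then extract the identity from the comparison formula together with Lemma~\ref{sniken}. By Proposition~\ref{propcomplexcomparison} there is a morphism $a : (F,\psi) \to (E,\varphi)$ of complexes extending the inclusion $A' \hookrightarrow A$; by Proposition~\ref{prop:mc-ses} its mapping cone $(G, \eta)$ is then a free resolution of $A''$, and the canonical map $\theta : (E,\varphi) \to (G,\eta)$ extends the surjection $A \twoheadrightarrow A''$. Equipping $G$ with the direct sum hermitian metric, by Lemma~\ref{lmaindep} it suffices to prove \eqref{eq:curSes} for this specific $(G,\eta)$.

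The first step is to apply the comparison formula \eqref{cirkus2} to $\theta$: since $E_{-1} = 0$ and $\theta_0 = \Id_{E_0}$, it yields
\begin{equation*}
R^G_p = \theta_p R^E_p + \eta_{p+1} M^0_{p+1}
\end{equation*}
for some pseudomeromorphic current $M^0_{p+1}$. Multiplying by $D\eta_1 \cdots D\eta_p$ on the left, applying Lemma~\ref{sniken} with $b = \theta$ and $\ell = 0$ to rewrite $D\eta_1 \cdots D\eta_p \theta_p$, using \eqref{dal} on the $M$-term, and invoking \eqref{boden} to kill $\delta_0 \varphi_p R^E_p$, one gets
\begin{equation*}
D\eta_1 \cdots D\eta_p R^G_p = D\varphi_1 \cdots D\varphi_p R^E_p + \eta_1 \bigl(\delta_1 R^E_p + D\eta_2 \cdots D\eta_{p+1} M^0_{p+1}\bigr).
\end{equation*}
Taking traces, this reduces the proposition to the identity
\begin{equation*}
\tr \eta_1 \bigl(\delta_1 R^E_p + D\eta_2 \cdots D\eta_{p+1} M^0_{p+1}\bigr) = -\tr D\psi_1 \cdots D\psi_p R^F_p.
\end{equation*}

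To establish this remaining identity, I would exploit the block structure of the mapping cone: $\eta$ carries $-\varphi$ on the $E$-diagonal, $\tilde\psi$ on the $\widetilde F$-diagonal, and an off-diagonal piece built from $a$ and $\varepsilon^{-1}$. In particular, $\eta_1$ restricted to the $E_1$-summand of $G_1$ is $-\varphi_1$, while on the $\widetilde F_0$-summand it is $a_0\varepsilon^{-1}$. Using trace cyclicity \eqref{banal} to move $\eta_1$ past the residue currents, the $E$-block contribution is killed by \eqref{havsbad} (which gives $R^E_p \varphi_1 = 0$), so only the $\widetilde F$-contribution routed through $a_0$ survives. Applying the $\varepsilon$-identifications \eqref{pluto}--\eqref{mars} to translate $\widetilde F$-valued expressions back to $F$, and recognising the result as what the comparison formula for the projection $\vartheta : (G,\eta) \to (F,\psi)$ produces out of $R^F_p$, one identifies this surviving contribution with $-\tr D\psi_1 \cdots D\psi_p R^F_p$.

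The principal obstacle will be the combinatorial bookkeeping of the superstructure signs from \eqref{jord}, \eqref{banal} and \eqref{mars}, combined with the minus sign on the $\varphi$-block of $\eta$, in verifying that after all cancellations using \eqref{eq:nabla1}, \eqref{havsbad} and \eqref{boden} exactly $-\tr D\psi_1 \cdots D\psi_p R^F_p$ emerges (rather than a sign-flipped or otherwise shifted expression). The remainder — the mapping cone reduction, the comparison formula, and the application of Lemma~\ref{sniken} — is fairly mechanical.
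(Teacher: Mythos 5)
Your overall blueprint matches the paper's: reduce via Lemma~\ref{lmaindep} to the mapping cone $(G,\eta)$ of $a:(F,\psi)\to(E,\varphi)$, apply the comparison formula to $\theta$, invoke Lemma~\ref{sniken}, and then exploit the block structure of the cone together with the $\varepsilon$-identifications. But two substantive pieces of the paper's argument are missing, and the final step as you describe it would not go through.

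First, you omit the reduction to the Cohen--Macaulay case. Since both sides of \eqref{eq:curSes} are pseudomeromorphic $(p,p)$-currents, the dimension principle allows one to verify the identity outside a subvariety of codimension $\geq p+1$, where one may assume $A$, $A'$, $A''$ are Cohen--Macaulay and hence that $(E,\varphi)$ and $(F,\psi)$ have length $p$ and $(G,\eta)$ has length $p+1$. Without this, $E_{p+1}$ need not vanish, so the comparison formula for $a:(F,\psi)\to(E,\varphi)$ does not reduce to \eqref{cirkus3}, i.e.\ you cannot conclude $R^E_p a_0 = a_p R^F_p$, which is exactly the ingredient you need when peeling off the $\widetilde F_0$-block of $\eta_1$ after using \eqref{havsbad} on the $E_1$-block.

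Second, the term $D\eta_2\cdots D\eta_{p+1}M^0_{p+1}$ cannot be ``recognised as what the comparison formula for $\vartheta$ produces'': $\vartheta$ is a degree $-1$ map, and the comparison formula of \cite{LComp} recalled in the paper applies only to morphisms of complexes. What the paper actually uses at this point is the explicit identity $M^0_{p+1}=-\sigma^G_{p+1}b_pR^E_p$ from \cite{LComp}*{Lemma~3.3 and eq.~(3.11)}, valid precisely because $A''$ is Cohen--Macaulay of codimension $p$ and $(G,\eta)$ has length $p+1$; this, together with $\sigma^G_{p+1}\eta_{p+1}=\Id$, is what lets the $M$-term be evaluated. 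Your plan gives no substitute for this identification. Finally, a smaller issue: the paper chooses the connection on $G_j$ to respect the splitting $G_j=E_j\oplus\widetilde F_{j-1}$ and to agree with $D_{E_j}$ on $E_j$, so that $Db_j=0$ and hence $\delta_\ell=0$, killing $\alpha_0$ outright. You instead carry $\alpha_0 R^E_p=\eta_1\delta_1 R^E_p$ forward, which is a nonzero, connection-dependent term; this is not fatal in principle, but you would then have to show that its trace cancels against the corresponding connection-dependence hidden in the $M^0_{p+1}$ contribution, an extra obligation the paper avoids and your sketch does not discharge.
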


\begin{proof}%[Proof of Proposition~\ref{prop:Rses}]
    By the dimension principle it is enough to prove \eqref{eq:curSes}
    outside a subvariety of codimension $p+1$, since all currents in the equation are pseudomeromorphic $(p,p)$-currents.
    Since a module of codimension $p$ is Cohen-Macaulay outside a subvariety of codimension $\geq p+1$,
    we may thus assume that $A, A', A''$ are all Cohen-Macaulay.
    By Lemma ~\ref{lmaindep} we may assume that
    $(E,\varphi)$, $ (F,\psi)$, and $(G,\eta)$ are any free
    resolutions of $A$, $A'$, and $A''$, respectively. In particular, we
    may assume that  $(E,\varphi)$ and $(F,\psi)$ have length
    $p$. Moreover, by Propositions ~\ref{propcomplexcomparison} and
    ~\ref{prop:mc-ses}  we may assume that $(G,\eta)$ is the mapping
    cone of a morphism $a : (F,\psi) \to (E,\varphi)$ that extends the
    inclusion $A'\to A$.
    Note that $(G, \eta)$ then has length $p+1$. Let $b: (E,\varphi) \to (G,\eta)$ be the morphism
    $\theta$ as defined in \eqref{eq:thetaMC}.
Since $b_0$ is invertible, one gets by the comparison formula, \eqref{cirkus2}, that
    \begin{equation} \label{eq:trE}
    D\eta_1 \cdots D\eta_p R^G_p = D\eta_1 \cdots D\eta_p b_p R^E_p b_0^{-1} + D\eta_1 \cdots D\eta_p\eta_{p+1} M^0_{p+1} b_0^{-1}
    =:  W_1 + W_2.
    \end{equation}

    By Lemma~\ref{sniken}, $W_1 = (\alpha_0 + \beta_0 + \gamma_0) R^E_p b_0^{-1},$
    where $\alpha_0,\beta_0,\gamma_0$ are as in the lemma.
    Since, by Lemma~\ref{lmaindep}, $\tr D\eta_1 \cdots D\eta_p R^G_p$
    is independent of the choice of connections on $G_0,\dots,G_p$,
    we may assume that the connection on $G_j$
    is such that it respects the direct sum $G_j = E_j\oplus
    \widetilde F_{j-1}$ for $j \geq 1$  and that it coincides with $D_{E_j}$ on $E_j \subseteq G_j$ for $j \geq 0$.
    With this connection $Db_j = 0$, so $\alpha_0 = \beta_0 = 0$, and, using
    \eqref{banal}, we conclude that
    \begin{equation} \label{eq:trEfirst}
        \tr W_1 = \tr \gamma_0 R^E_p b_0^{-1} = \tr b_0 D\varphi_1 \cdots
        D\varphi_p R^E_p b_0^{-1}= \tr D\varphi_1 \cdots D\varphi_p R^E_p.
    \end{equation}

    We now consider the term $\tr W_2$.
Since $A''$ is Cohen-Macaulay of codimension $p$, $A$ has codimension $p$, and $(G,\eta)$ is a
free resolution of $A''$ of length $p+1$, by \cite{LComp}*{Lemma~3.3 and equation
  (3.11)}, $M^0_{p+1} = -\sigma^G_{p+1} b_p R^E_p$, where
$\sigma^G_{p+1}$ is a smooth
    $\Hom(G_p,G_{p+1})$-valued morphism, such that
\begin{equation}\label{nate}
\sigma_{p+1}^G\eta_{p+1}=\Id_{G_{p+1}}.
\end{equation}
Therefore, in view of \eqref{dal} and \eqref{banal},
    \begin{equation*}
        \tr W_2 = - \tr D\eta_2 \cdots D\eta_{p+1} \sigma^G_{p+1} b_p R^E_p b_0^{-1} \eta_1.
    \end{equation*}
    Using that $b_0=\Id_{E_0}$, that $\eta_1 = \left[\begin{array}{cc} -\varphi_1 & a_0 \varepsilon^{-1} \end{array}\right]$, \eqref{havsbad}, and the comparison formula, \eqref{cirkus3}, for $a : (F,\psi) \to (E,\varphi)$, we get that
    \begin{equation*}
        R^E_p b_0^{-1}\eta_1 = R^E_p \left[\begin{array}{cc} 0 & a_0 \varepsilon^{-1} \end{array} \right] = \left[\begin{array}{cc} 0 & a_p R^F_p \varepsilon^{-1}  \end{array}\right].
    \end{equation*}
Note that
\begin{equation*}
    \eta_{p+1} = \left[ \begin{array}{c} a_{p} \varepsilon^{-1} \\
                          \tilde{\psi}_{p} \end{array}\right] \text{
                        and thus }
D\eta_2 \cdots D\eta_{p+1} = \left[\begin{array}{c} *  \\
                                     D\tilde\psi_1 \cdots
                                     D\tilde\psi_p \end{array}\right].
\end{equation*}
It follows that
    \begin{multline}\label{natalitet}
\tr W_2
            = -\tr \left[\begin{array}{c} *  \\ D\tilde\psi_1 \cdots D\tilde\psi_p \end{array}\right] \sigma^G_{p+1} b_p \left[\begin{array}{cc} 0 & a_p R^F_p \varepsilon^{-1} \end{array}\right] \\
                = - \tr D\tilde\psi_1 \cdots D\tilde\psi_p
                \sigma^G_{p+1} b_p a_p R^F_p \varepsilon^{-1}.
    \end{multline}
Moreover,
\begin{equation*}
b_p a_p = \left[\begin{array}{c} (-1)^p a_p \\ 0 \end{array}\right] =
(-1)^p \left[\begin{array}{c} a_p \varepsilon^{-1} \\
        0 \end{array}\right]\varepsilon = (-1)^p \left( \eta_{p+1} -
      \left[\begin{array}{c} 0 \\ \tilde\psi_p \end{array}\right]
    \right)\varepsilon.
\end{equation*}
In view of \eqref{pluto} and \eqref{boden}, note that $\tilde\psi_p\varepsilon R^F_p = \varepsilon \psi_p   R^F_p = 0$. Therefore
    \begin{equation*}
        D\tilde\psi_1 \cdots D\tilde\psi_p \sigma^G_{p+1} b_p a_p R^F_p
        = (-1)^p D\tilde\psi_1 \cdots D\tilde\psi_p \varepsilon R^F_p
= \varepsilon D\psi_1 \cdots D\psi_p  R^F_p,
    \end{equation*}
cf.\ \eqref{nate} and \eqref{mars}.
   Plugging this into \eqref{natalitet} and using
    \eqref{banal}, we get
    \begin{equation} \label{eq:trEsecond}
        \tr W_2 =
- \tr \varepsilon D\psi_1 \cdots D\psi_p  R^F_p\varepsilon^{-1} = - \tr D\psi_1 \cdots
        D\psi_p  R^F_p.
    \end{equation}
    We thus conclude that \eqref{eq:curSes} holds by combining \eqref{eq:trE}, \eqref{eq:trEfirst}, and \eqref{eq:trEsecond}.
\end{proof}

\begin{proof}[Proof of Theorem~\ref{thm:higherrank}]
It is enough to prove \eqref{nordbotten} locally and by the dimension principle, since both sides of \eqref{nordbotten}
are pseudomeromorphic currents of bidegree $(p,p)$, it is enough to prove \eqref{nordbotten} outside
a subvariety of $Z := \supp \mathcal{F}$ of positive codimension.
By Corollary ~\ref{cor:clean} we may thus assume that we are at a point $z \in Z$
    such that $\mathcal{F}_z$ has a clean filtration
    \eqref{sommaren}, where each $M_i$ is of pure codimension $p$; in particular each $P_i$ is of  codimension ~$p$.

  Since $\mathcal F_z = M_m$, we may prove the theorem by
  proving it for $M_i$ by induction over $i$.
The basic case $i=1$ follows by Lemma~\ref{lma:basiccase}.
Next assume that Theorem \ref{thm:higherrank} holds for $M_i$. Consider the short exact sequence
    \begin{equation*}
        0 \to M_i \to M_{i+1} \to \Ok/P_{i+1} \to 0
    \end{equation*}
and assume that $(F,\psi)$, $(E,\varphi)$, and $(G,\eta)$ are
hermitian free resolutions of $M_i$, $M_{i+1}$, and $\Ok/P_{i+1}$,
respectively.
Then
\begin{multline*}
\frac{1}{(2\pi i)^p p!} \tr D\varphi_1\cdots D\varphi_p R^E =\\
\frac{1}{(2\pi i)^p p!} \tr D\psi_1 \cdots D\psi_p R^F_p +
\frac{1}{(2\pi i)^p p!} \tr D\eta_1 \cdots D\eta_p R^G
=\\
[M_i]+[\Ok/P_{i+1}]=[M_{i+1}],
\end{multline*}
where we have used Proposition ~\ref{prop:Rses} for the first
equality, and the induction hypothesis and Lemma \ref{lma:basiccase} for
the second. For the last equality, we use the fact that if
we have a short exact sequence of $\Ok$-modules, $0\to A'\to A\to A'' \to 0$, then
%$[A]=[A']+[A'']$.
\begin{equation*}%\label{pitea}
[A]=[A']+[A''].
\end{equation*}
\end{proof}

\section{Proof of Theorem~\ref{thm:main}}\label{stiga}

The proof of Theorem~\ref{thm:main} is by induction over the number of nonvanishing
homology groups. In order to achieve a complex with one less nonvanishing homology group,
we will use the following lemma.

\begin{lma} \label{lma:bigDiagram}
    Let $(E,\varphi)$ be a complex of free $\Ok_{X,x}$-modules of length $k+p$, where $k \geq 1$, such that $H_\ell(E) = 0$ for $\ell > k$.
    Then there exists a neighborhood $x\in\mathcal U\subset X$ and a
    subvariety $W\subset\mathcal U$ of codimension $\geq p+1$ such that for
    $y\in \mathcal U\setminus W$ one can find
    free $\Ok_{X,y}$-modules $G_\ell$ and morphisms $\eta_\ell$ and $b_\ell$ for $\ell=k+1,\dots,k+p-1$ such that the diagram
    \begin{equation} \label{eq:bigDiagram}
\xymatrixcolsep{5mm}
\xymatrix{
   & 0 \ar[r] & G_{k+p-1} \ar[r]%^{\eta_{k+p-1}}
& G_{k+p-2} \ar[r] & \cdots \ar[r] &
   G_{k+1} \ar[r] &  E_k \ar[r] &  E_{k-1} \ar[r] & \cdots\ar[r] &
   E_0 \ar[r]  & 0\\
   0 \ar[r] & E_{k+p} \ar[r] \ar[u] & E_{k+p-1}
   \ar[r] \ar[u]^{b_{k+p-1}} & E_{k+p-2}
   \ar[r] \ar[u]^{b_{k+p-2}} &  \cdots \ar[r] & E_{k+1}
   \ar[u]^{b_{k+1}}  \ar[r] & E_k \ar[r]
   \ar[u]^{\cong}& E_{k-1} \ar[r]\ar[u]^{\cong} & \cdots \ar[r] &
   E_0\ar[u]^{\cong} \ar[r]  & 0 \\
   0 \ar[r] & E_{k+p} \ar[r] \ar[u]^{\cong} & E_{k+p-1} \ar[r] \ar[u]^{\cong} & F_{k+p-2} \ar[r] \ar[u]^{a_{k+p-2}} & \cdots \ar[r] & F_{k+1} \ar[u]^{a_{k+1}} \ar[r] & \widetilde G_{k+1} \ar[u]^{a_k}  \ar[r]& 0 \ar[u] &
   }
\end{equation}
    is commutative and the rows $(G,\eta)$ and %$(E,\varphi)$ and
    $(F,\psi)$ are complexes,
and $a:(F,\psi)\to (E,\varphi)$ and $b : (E,\varphi)\to (G,\eta)$ are
morphisms of complexes.
Here %where
    $F_\ell = \widetilde G_{\ell+1} \oplus E_\ell$ for $\ell=k+1,\ldots,k+p-2$ and
\begin{multline*}
    a_k = \left[ \begin{array}{c} b_k^{-1} \eta_{k+1} \varepsilon^{-1} \end{array} \right] \text{, }
    a_\ell = \left[\begin{array}{cc} 0 & \Id_{E_\ell} \end{array}\right] \text{ for $\ell=k+1,\dots,k+p-2$, } \\
    \psi_{k+p-1} = \left[\begin{array}{c} \varepsilon b_{k+p-1} \\ \varphi_{k+p-1} \end{array}\right] \text{, }
    \psi_{k+\ell} = \left[\begin{array}{cc} -\tilde\eta_{k+\ell+1} & \varepsilon b_{k+\ell} \\ 0 & \varphi_{k+\ell} \end{array}\right]
    \text{ for $\ell = 2,\dots,p-2$, and } \\
    \psi_{k+1} = \left[\begin{array}{cc} -\tilde\eta_{k+2} & \varepsilon b_{k+1} \end{array}\right].
\end{multline*}
    Moreover, the complexes in the rows have the following homology groups:
    \begin{equation}
    \begin{gathered} \label{eq:rowsHomology}
    \xymatrixcolsep{5mm}
    \xymatrixrowsep{5mm}
    \xymatrix{
    \cdots & 0 & 0              & H_{k-1}(E) & H_{k-2}(E) & \cdots & H_0(E) \\
    \cdots & 0 & H_k(E) & H_{k-1}(E) & H_{k-2}(E) & \cdots & H_0(E) \\
    \cdots & 0 & H_k(E) & 0                  & 0                  & \cdots & 0
    }
    \end{gathered}.
    \end{equation}
\end{lma}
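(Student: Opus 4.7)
The approach is to construct $(G,\eta)$ together with the chain map $b$ first, and then obtain $(F,\psi)$ and $a$ as a shifted, sign-adjusted, truncated mapping cone of $b$. I work locally at a point $y$ outside a subvariety $W \subset \mathcal U$ of codimension at least $p+1$.

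To build $(G,\eta)$, I set $G_\ell := E_\ell$ and $\eta_\ell := \varphi_\ell$ for $\ell \leq k$, so that by construction $\mathcal H_j(G) = H_j(E)$ for $j < k$. Above level $k$, I want free modules $G_{k+1}, \ldots, G_{k+p-1}$ and morphisms fitting into an exact sequence
\begin{equation*}
0 \to G_{k+p-1} \to \cdots \to G_{k+1} \stackrel{\eta_{k+1}}{\longrightarrow} \ker \varphi_k \to 0,
\end{equation*}
i.e., a free resolution of $\ker \varphi_k$ using the $p-1$ free modules at our disposal. These are built iteratively by surjecting free modules onto successive syzygies. The morphism $b$ is then defined by $b_\ell := \Id$ for $\ell \leq k$, and extended inductively for $\ell > k$ by lifting $b_{\ell-1}\varphi_\ell$ through the surjection $G_\ell \twoheadrightarrow \ker \eta_{\ell-1}$; this is possible since $E_\ell$ is free and $b_{\ell-1}\varphi_\ell$ lands in $\ker \eta_{\ell-1}$ by the chain map condition at the previous step.

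Next, the bottom row $(F,\psi)$ is given by the explicit formulas in the statement. For $\ell \geq k+1$, these amount to the shifted mapping cone of $b$, i.e., $F_\ell \cong \widetilde{C(b)_{\ell+1}}$ (with the tilde operation from Section~\ref{ssect:super}), while $F_k = \widetilde G_{k+1}$ is a truncation obtained by dropping the $\widetilde E_k$-summand of $\widetilde{C(b)_{k+1}}$, together with the corresponding truncation in $\psi_{k+1}$. The morphism $a$ is the natural projection onto the $E$-summand in the intermediate degrees, and at the bottom is $a_k = b_k^{-1}\eta_{k+1}\varepsilon^{-1}$, identifying $\widetilde G_{k+1}$ with $\ker \varphi_k \subset E_k$ through $\eta_{k+1}$. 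Checking that $(F,\psi)$ is a complex, that $a$ is a chain map, and that \eqref{eq:bigDiagram} commutes is then a direct computation combining $\varphi^2 = \eta^2 = 0$, $\eta b = b\varphi$, and the sign rules from Section~\ref{ssect:super}. The homology claims \eqref{eq:rowsHomology} follow: for the top row by the very construction; for the bottom row by directly computing $H_k(F) = F_k / \im \psi_{k+1}$, which equals $\ker \varphi_k / \im \varphi_{k+1} = H_k(E)$ using $\eta_{k+1} b_{k+1} = \varphi_{k+1}$ together with the exactness of $(G,\eta)$ at level $k+1$, while vanishing in the higher degrees follows similarly from the exactness of $(G,\eta)$ above level $k$.

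The principal obstacle is guaranteeing that a free resolution of $\ker \varphi_k$ fits within the allotted $p-1$ slots, equivalently that $\mathrm{pd}_{\Ok_{X,y}}(\ker \varphi_k) \leq p-2$ for $y$ outside a subvariety of codimension $\geq p+1$. I would approach this by combining the free resolution of $\im \varphi_{k+1}$ of length $p-1$ coming from the exact sub-complex $0 \to E_{k+p} \to \cdots \to E_{k+1}$, the short exact sequence $0 \to \im \varphi_{k+1} \to \ker \varphi_k \to H_k(E) \to 0$, and the Auslander--Buchsbaum formula at generic points; the bad locus where the projective dimension bound fails is then controlled to have codimension $\geq p+1$.
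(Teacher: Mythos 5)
Your construction of $(G,\eta)$ (free modules resolving $\ker\varphi_k$ above level $k$, identity below), your lift of $b$ by a diagram chase, and your realization of $(F,\psi)$ as a shifted, sign-adjusted, truncated mapping cone of $b$ all match the paper's proof, and your direct homology computation for the bottom row is a legitimate substitute for the paper's use of the long exact sequence \eqref{eq:les-mc}. What you do not successfully establish is the one quantitative claim on which the whole lemma rests: that outside a subvariety $W$ of codimension $\geq p+1$ one has $\mathrm{pd}(\ker\varphi_k)\leq p-2$ (equivalently $\mathrm{pd}(\coker\varphi_k)\leq p$), so that the resolution above level $k$ really fits in the $p-1$ available slots.

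Your sketch for this step has a genuine gap. From the short exact sequence $0\to\im\varphi_{k+1}\to\ker\varphi_k\to H_k(E)\to 0$ the standard estimate is $\mathrm{pd}(\ker\varphi_k)\leq\max\bigl(\mathrm{pd}(\im\varphi_{k+1}),\,\mathrm{pd}(H_k(E))\bigr)$, and the length-$(p-1)$ resolution of $\im\varphi_{k+1}$ coming from $0\to E_{k+p}\to\cdots\to E_{k+1}$ only gives $\mathrm{pd}(\im\varphi_{k+1})\leq p-1$, which makes the right-hand side at best $p-1$, not $p-2$. Trying to sharpen this by applying Buchsbaum--Eisenbud to that subcomplex, or Auslander--Buchsbaum to $\im\varphi_{k+1}$, only improves the bound to $p-2$ outside a set of codimension $\geq p-1$, which is too large by two. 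Worse, your appeal to Auslander--Buchsbaum for $H_k(E)$ ``at generic points'' implicitly uses that the codimension of $\supp H_k(E)$ is known (e.g.\ equal to $p$), but the lemma carries no purity or codimension hypothesis on $H_k(E)$ at all. The paper sidesteps all of this: it sets $\mathcal F=\coker\varphi_k$, takes a free resolution $(K,\kappa)$ of $\mathcal F$ extending $E_k\to E_{k-1}$, and applies the Buchsbaum--Eisenbud acyclicity criterion to this \emph{exact} complex to conclude $\codim Z^K_{p+1}\geq p+1$ with no reference to $H_k(E)$; this is what yields both the correct codimension of $W$ and the ability to truncate the resolution at length $p$ outside $W$. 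You should replace your Auslander--Buchsbaum/SES argument with this direct application of Buchsbaum--Eisenbud to a resolution of $\coker\varphi_k$.
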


The left-most part of the complex $(G,\eta)$ is a free resolution of $\coker \varphi_k$ and
the complex $(F,\psi)$ is essentially the mapping cone of $b : (E,\varphi) \to (G,\eta)$.

\begin{proof}
%Let $\mathcal U$ be a neighborhood of $x$ where $(E,\varphi)$ is defined.
Let $\mathcal{F} = \coker \varphi_k$. %E_{k-1}/\im \varphi_k$.
Given any locally free resolution $(K,\kappa)$ of $\mathcal{F}$, there
are associated (germs of) subvarieties
$Z^K_k$ where $\kappa_k$ does not have maximal rank. By uniqueness of minimal free resolutions,
these sets are independent of the choice of resolution $(K,\kappa)$ and thus associated with $\mathcal{F}$,
and we may denote them by $Z^\mathcal{F}_k$ instead. Take $\mathcal U$
to 
be any neighborhood of $x$ where $(E,\varphi)$ is defined and $W$ to be $Z^\mathcal{F}_{p+1}$.
By the Buchsbaum-Eisenbud criterion $\codim W \geq p+1$, see \cite{Eis}*{Theorem~20.9}.

Assume that we are outside $W$, and take locally a free
resolution
\begin{equation*}
    0 \to K_{N_0} \stackrel{\kappa_{N_0}}{\longrightarrow} \cdots \to K_2
    \stackrel{\kappa_2}{\longrightarrow} E_k \stackrel{\varphi_k}{\longrightarrow} E_{k-1}
\end{equation*}
of $\mathcal F$.
Since we are outside $W$, $\im \kappa_{p+1}$ is free
so if we replace $K_p$ by $K_p/\im \kappa_{p+1}$, we can assume that the free resolution is of the form
\begin{equation*}
    0 \to K_p \stackrel{\kappa_p}{\longrightarrow} \cdots \to K_2 \stackrel{\kappa_2}\longrightarrow E_k \stackrel{\varphi_k}{\longrightarrow} E_{k-1}.
\end{equation*}
We let $G_{k+\ell-1} = K_\ell$ and $\eta_{k+\ell-1} = \kappa_\ell$ for $\ell=2,\dots,p$,
which then gives the complex in the top row of
\eqref{eq:bigDiagram}. This complex has the stated homology groups in the first row of \eqref{eq:rowsHomology}, since $(G,\eta)$ by construction is exact at levels $\geq k$.

Since the top row of \eqref{eq:bigDiagram} is exact at levels $\geq k$ and the modules in the middle row are free,
one can, locally, by a diagram chase inductively construct
$b_{k+1},\dots,b_{k+p-1}$ so that the diagram \eqref{eq:bigDiagram} commutes in the top two rows,
cf., e.g., the proof of \cite{Eis}*{Proposition A3.13}.

We now turn to the bottom row of \eqref{eq:bigDiagram}. Let $(C,\mu)$ be the mapping cone of the morphism
$b : (E,\varphi) \to (G,\eta)$ and let $\vartheta : (C,\mu) \to (E,\varphi)$ be the induced morphism of complexes
of degree $-1$ as defined by \eqref{eq:varthetaMC}.
Recall that
\begin{equation*}
    \mu_\ell = \left[ \begin{array}{cc} -\eta_\ell & b_{\ell-1} \varepsilon^{-1} \\ 0 & \tilde{\varphi}_{\ell-1} \end{array} \right].
\end{equation*}
On $C_\ell$ for $\ell=0,\ldots,k+1$, we do the change of basis given by the isomorphism
\begin{equation*}
    \alpha_\ell = \left[ \begin{array}{cc} \Id_{G_\ell} & 0 \\ \varepsilon b_{\ell-1}^{-1} \eta_{\ell} & \Id_{\widetilde E_{\ell-1}} \end{array} \right],
\end{equation*}
i.e., we replace $\mu_\ell$ by $\alpha_{\ell-1}^{-1}\mu_\ell
\alpha_\ell$ for $\ell=1,\ldots,k+1$, $\mu_{k+2}$ by
$\alpha_{k+1}^{-1}\mu_{k+2}$, and $\vartheta_\ell$ by
$\vartheta_{\ell} \alpha_{\ell+1}$ for $\ell=0,\ldots,k$.
Note that for these $\ell$, $b_{\ell-1}$ is the identity and thus invertible.
In this new basis, using that $\varepsilon b_{\ell-1}^{-1}\eta_\ell b_\ell \varepsilon^{-1} = \tilde{\varphi}_\ell$ for $\ell=1,\ldots,k+1$,
we get that
\begin{multline*}
    \mu_{k+2} = \left[ \begin{array}{cc} -\eta_{k+2} & b_{k+1} \varepsilon^{-1}  \\ 0 & 0 \end{array} \right],
    \mu_\ell = \left[ \begin{array}{cc} 0 & b_{\ell-1} \varepsilon^{-1} \\ 0 & 0 \end{array} \right] \text{ for $\ell=1,\ldots,k+1$, and } \\
    \vartheta_{\ell} = \left[ \begin{array}{cc} b_\ell^{-1}
                                \eta_{\ell+1} &
                                                \varepsilon^{-1} \end{array}
                                            \right]
\text{ for $\ell=0,\ldots,k$}.
\end{multline*}
Hence $(C,\mu)$ contains as summands the trivial complexes
\begin{equation} \label{eq:trivialParts}
    0 \to \widetilde E_{\ell-1} \stackrel{b_{\ell-1}\varepsilon^{-1}}{\longrightarrow} G_{\ell-1} \to 0 \text{ for $1 \leq \ell \leq k+1$.}
\end{equation}
We let $(F,\psi)$ be the complex $(C,\mu)$ where we use the new basis
as described above and remove the
trivial summands \eqref{eq:trivialParts}, and where we moreover shift
the degree by $1$ and change the signs so that $F_\ell=\widetilde
C_{\ell+1}$ and the morphisms are adjusted accordingly.
The morphisms $\mu$ and $\vartheta$ given by the mapping cone, adjusted accordingly, are then indeed the morphisms $\psi$ and $a$ as in the statement of the lemma.
Since $(b_\ell)_* : H_\ell(E) \to H_\ell(G)$ is an isomorphism for
$\ell=0,\ldots,k-1$, $H_\ell(G) = 0$ for $\ell \geq k$,
and $H_\ell(E)= 0$ for $\ell \geq k+1$, it follows from the long exact sequence \eqref{eq:les-mc} that $H_{\ell}(F) = H_{\ell+1}(C) = 0$ for $\ell \neq k$
and that $H_k(F) = H_{k+1}(C) \stackrel{\cong}{\to} H_k(E)$.
\end{proof}

For a complex $(E,\varphi)$ of length $N = k+p$, we will introduce the shorthand notation
\begin{equation*}
    (\tr D\varphi R^E)_p := \sum_{\ell=0}^k (-1)^\ell \tr D\varphi_{\ell+1} \cdots D\varphi_{\ell+p} (R^E)^{\ell}_{\ell+p}.
\end{equation*}

\begin{prop} \label{prop:inductionStep}
    Let $(E,\varphi)$, $(F,\psi)$, and $(G,\eta)$ be as in Lemma~\ref{lma:bigDiagram} and
    assume they are generically exact hermitian complexes.
    Then
    \begin{equation} \label{eq:inductionStep}
        (\tr D\varphi R^E)_p = (\tr D\psi R^F)_p + (\tr D\eta R^G)_p.
    \end{equation}
\end{prop}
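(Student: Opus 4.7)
The strategy follows that of Proposition~\ref{prop:Rses}, but is considerably more involved because of the sums over levels $\ell$ in \eqref{eq:inductionStep} and because the boundary currents $M^\ell_k$ from the comparison formula do not in general vanish by the dimension principle. Both sides of \eqref{eq:inductionStep} are pseudomeromorphic $(p,p)$-currents supported in a subvariety of codimension $\geq p$; by the dimension principle it therefore suffices to prove the identity outside a subvariety of codimension $\geq p+1$, so we may assume the conclusion of Lemma~\ref{lma:bigDiagram} applies throughout.

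The plan is to apply the comparison formula \eqref{cirkus1} twice: to $b : (E,\varphi) \to (G,\eta)$ at each level $(\ell,\ell+p)$ with $\ell = 0,\ldots,k-1$, and to $a : (F,\psi) \to (E,\varphi)$ at level $(k,k+p)$. For $b$, we use $b_\ell = \Id_{E_\ell}$ for $\ell \leq k$, which simplifies the formula considerably, together with the vanishings $M^{-1}_p = 0$ and $M^{k-1}_{k+p} = 0$ (the latter holds because $G_{k+p} = 0$). For $a$, both correction terms vanish because $E_{k+p+1} = 0$ and $F_{k-1} = 0$; combined with $a_{k+p} = \Id$ and $a_k = \eta_{k+1}\varepsilon^{-1}$ this yields the clean relation
\begin{equation*}
(R^E)^k_{k+p}\,\eta_{k+1} = (R^F)^k_{k+p}\,\varepsilon,
\end{equation*}
which will identify the $\ell = k$ contribution on the $E$-side with the $F$-side.

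Next, I would choose convenient connections to simplify the bookkeeping: on $E_j$ and $G_j$, let the connections agree for $j \leq k$ so that $Db_j = 0$ there; on $F_\ell = \widetilde G_{\ell+1} \oplus E_\ell$ (for $\ell = k+1, \ldots, k+p-2$), let the connection respect the direct sum decomposition and agree with the chosen connections on the summands. With these choices, Lemma~\ref{sniken} rewrites each product $D\eta_{\ell+1}\cdots D\eta_{\ell+p}\, b_{\ell+p}$ as $\alpha_\ell + \beta_\ell + D\varphi_{\ell+1}\cdots D\varphi_{\ell+p}$, where the $\alpha_\ell$ and $\beta_\ell$ terms now only involve $Db_j$ with $j \geq k+1$. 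One then uses \eqref{dal} to move $\varphi$'s through the $D\varphi$-products and the $\nabla_\End$-closedness \eqref{eq:nabla1} of $R^E$ to trade $\varphi_{\ell+p}(R^E)^\ell_{\ell+p}$ for $(R^E)^{\ell-1}_{\ell+p-1}\varphi_\ell$ modulo $\dbar$-exact terms.

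The main obstacle is to check that, after these substitutions and after summing over $\ell$ with the alternating signs, the contributions of the $\alpha_\ell$- and $\beta_\ell$-terms from Lemma~\ref{sniken}, the boundary currents $\eta_{\ell+p+1}M^\ell_{\ell+p+1}$ and $M^{\ell-1}_{\ell+p}\varphi_\ell$ from the comparison formula, and the $\dbar$-terms produced by \eqref{eq:nabla1} telescope, with cancellations at interior levels, leaving exactly the missing level-$\ell = k$ contribution in $(\tr D\varphi R^E)_p$ minus the single level-$k$ contribution in $(\tr D\psi R^F)_p$. The final identification with $(\tr D\psi R^F)_p$ uses the block form of $\psi$ provided by Lemma~\ref{lma:bigDiagram}, the relation derived from the comparison formula for $a$, and the superstructure manipulations \eqref{jord}--\eqref{mars}, \eqref{pluto}, in analogy with the computation at the end of the proof of Proposition~\ref{prop:Rses} involving $\sigma^G_{p+1}$.
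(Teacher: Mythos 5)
Your proposal identifies the right high-level architecture, which indeed matches the paper's: apply the comparison formula to $b$ at levels $\ell=0,\dots,k-1$ and to $a$ at level $k$, feed the result through Lemma~\ref{sniken}, and then telescope. The difficulty is that the entire substance of the paper's proof (two subsections) is devoted to verifying that the $M$-terms sum to zero, that the $\alpha_\ell/\beta_\ell$ contributions from Lemma~\ref{sniken} telescope against each other via \eqref{mal}, \eqref{eq:nabla2}, and \eqref{banal} to leave exactly the boundary term $(-1)^{k-1}\tr\beta_k (R^E)^k_{k+p} b_k^{-1}$, and that the single surviving $F$-term decomposes (via the block form of $\psi$ and the relations \eqref{ren}, \eqref{elk}) into $(-1)^k\tr b_k^{-1}\beta_k(R^E)^k_{k+p}$ plus the missing level-$k$ term of the $E$-sum, with the two $\beta_k$ terms cancelling by \eqref{banal}. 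In your writeup this whole chain is flagged as ``the main obstacle to check'' rather than carried out, so as it stands the proposal is a plausible plan rather than a proof.

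Two more specific points. First, you mention trading $\varphi$'s through $R^E$ ``modulo $\dbar$-exact terms''; no such terms appear, because the paper uses \eqref{eq:nabla2} at exactly those bidegrees where $R^\ell_k$ itself vanishes by the dimension principle (so $\dbar R^\ell_k = 0$). Second, your choice of connections with $Db_j=0$ for $j\le k$ is a genuine deviation from the paper, which proves the identity for arbitrary connections on all three complexes and never invokes Lemma~\ref{lmaindep} here; since the individual traces $(\tr D\eta R^G)_p$ and $(\tr D\psi R^F)_p$ are not known a priori to be independent of the connection (Lemma~\ref{lmaindep} only covers the free-resolution case), your argument would establish only the special-connection version of the proposition, and some additional justification would be needed to conclude it for arbitrary connections (though for the purposes of the main theorem, where the $F$- and $G$-connections are at our disposal, the weaker version would in fact suffice). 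Also a small slip: the relation from the $a$-comparison is $(R^E)^k_{k+p}\, b_k^{-1}\eta_{k+1} = (R^F)^k_{k+p}\,\varepsilon$, with the factor $b_k^{-1}$ coming from $a_k = b_k^{-1}\eta_{k+1}\varepsilon^{-1}$.
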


%The proof occupies the next section.

\begin{proof}[Proof of Theorem~\ref{thm:main}]
We prove by induction over $k$ that for each generically exact hermitian complex
$(E,\varphi)$ of length $\leq k+p$ such that  $H_\ell(E) = 0$ for
$\ell > k$, the associated residue current satisfies
\eqref{eq:main}. Since $(E,\varphi)$ in Theorem \ref{thm:main}
%trivially
has
length $N$ and $H_\ell(E)$ has pure codimension $p$, $(E,\varphi)$
has this property for $k=N$, and thus the theorem follows.

First note that the case $k = 0$ is Theorem~\ref{thm:higherrank}.
Next assume that \eqref{eq:main} holds for residue currents associated
with complexes $(E,\varphi)$ of length $k-1+p$ such that that $H_\ell(E)=
0$ for $\ell > k-1$. It is enough to prove \eqref{eq:main} locally
and since both sides in \eqref{eq:main} are pseudomeromorphic currents of bidegree $(p,p)$,
by the dimension principle,
it is enough to prove it outside a subvariety of
codimension $p+1$. Therefore we can assume that we have generically
exact hermitian complexes
$(F,\psi)$ and $(G,\eta)$ as in Lemma \ref{lma:bigDiagram}.
It follows from Theorem \ref{thm:higherrank} and
\eqref{eq:rowsHomology} that
\begin{equation*}%\label{tretton}
\frac{1}{(2\pi i)^p p!}(\tr D\psi R^F)_p=(-1)^k[H_k(E)].
\end{equation*}
Moreover, by the induction hypothesis and
\eqref{eq:rowsHomology}
\begin{equation*}%\label{april}
\frac{1}{(2\pi i)^p p!}(\tr D\eta R^G)_p=\sum_{\ell=0}^{k-1}(-1)^\ell[H_\ell(E)].
\end{equation*}
Now \eqref{eq:main} follows from Proposition
\ref{prop:inductionStep}.
\end{proof}

\subsection{Proof of Proposition~\ref{prop:inductionStep}}

%In the proof of Proposition~\ref{prop:inductionStep},
We will compute the two terms on the right-hand side of
\eqref{eq:inductionStep} separately.

%, and through the expressions we obtain for these two terms,
%\eqref{soto} and \eqref{eq:DxiRK}, it follows directly that \eqref{eq:inductionStep} holds.

\subsubsection{Computing $(\tr D\eta R^G)_p$}\label{deta}

Let us consider the currents
\begin{equation}\label{kapten}
D\eta_{\ell+1} D\eta_{\ell+2}\cdots D\eta_{\ell+p} (R^G)^\ell_{\ell+p}
\end{equation}
that one takes the trace of in $(\tr D\eta R^G)_p$.
Note that \eqref{kapten} vanishes for $\ell\geq
k$ since then $G_{\ell+p}=0$. It remains to consider the cases $\ell=0,\ldots, k-1$. For these $\ell$,
$b_\ell$ is an isomorphism and thus it is invertible.
By the comparison formula, \eqref{cirkus1}, we have
\begin{equation*}
 (R^G)^{\ell}_{\ell+p} b_\ell   = b_{\ell+p} (R^E)^\ell_{\ell+p} + M^{\ell-1}_{\ell+p} \varphi_\ell + \eta_{\ell+p+1}M^\ell_{\ell+p+1}.
\end{equation*}
It follows that
\eqref{kapten} equals
\begin{multline*}
D\eta_{\ell+1} \cdots D\eta_{\ell+p}b_{\ell+p}
(R^E)^\ell_{\ell+p}b_\ell^{-1} + \\
D\eta_{\ell+1} \cdots D\eta_{\ell+p}M^{\ell-1}_{\ell+p} \varphi_\ell b_\ell^{-1}
+
D\eta_{\ell+1} \cdots D\eta_{\ell+p}\eta_{\ell+p+1}
M^\ell_{\ell+p+1} b_\ell^{-1}.
\end{multline*}
We rewrite the trace of the last term as
\begin{multline*}
\tr %\big (
D\eta_{\ell+1}\cdots D\eta_{\ell+p} \eta_{\ell+p+1}
M^\ell_{\ell+p+1}b_\ell^{-1}%\big)
=
\tr %\big (
\eta_{\ell+1} D\eta_{\ell+2}\cdots D\eta_{\ell+p+1}
M^\ell_{\ell+p+1}b_\ell^{-1}%\big)
=\\
\tr %\big (
D\eta_{\ell+2}\cdots D\eta_{\ell+p+1}
M^\ell_{\ell+p+1}b_\ell^{-1}\eta_{\ell+1}%\big)
=
\tr %\big (
D\eta_{\ell+2}\cdots D\eta_{\ell+p+1}
M^\ell_{\ell+p+1}\varphi_{\ell+1}b_{\ell+1}^{-1}.
%\big).
\end{multline*}
Here we have used \eqref{dal} for the first equality and
\begin{equation}\label{mal}
b_{\ell}^{-1}\eta_{\ell+1}=\varphi_{\ell+1}b_{\ell+1}^{-1}
\end{equation}
for the last equality; indeed, since $\ell<k$, $b_{\ell+1}$ is invertible. For the middle equality we have used
\eqref{banal}; note that the sign is $1$ since both
$D\eta_{\ell+2}\cdots D\eta_{\ell+p+1}M^\ell_{\ell+p+1}b_\ell^{-1}$
and $\eta_{\ell+1}$ have odd total
and endomorphism degrees.
It follows that
\begin{multline*}
\sum_{\ell=0}^{k-1} (-1)^\ell \Big (\tr %\big (
D\eta_{\ell+1}\cdots
  D\eta_{\ell+p} M^{\ell-1}_{\ell+p}\varphi_\ell b_\ell^{-1}%\big)
+ \tr %\big (
D\eta_{\ell+1}\cdots
  D\eta_{\ell+p} \eta_{\ell+p+1} M^{\ell}_{\ell+p+1} b_\ell^{-1}%\big)
  \Big )
=\\
\tr %\big (
D\eta_{1}\cdots
  D\eta_{p} M^{-1}_{p}\varphi_0 b_0^{-1}%\big)
+
(-1)^{k-1}\tr %\big (
D\eta_{k}\cdots
  D\eta_{k+p-1} \eta_{k+p} M^{k-1}_{k+p} b_{k-1}^{-1}%\big)
=0,
\end{multline*}
since $\varphi_0=0$ and $\eta_{k+p}=0$.
Thus
\begin{multline}\label{furir2}
(\tr D\eta R^G)_p=\sum_{\ell=0}^{k-1} (-1)^\ell \tr D\eta_{\ell+1} \cdots D\eta_{\ell+p}
(R^G)^\ell_{\ell+p} =  \\
\sum_{\ell=0}^{k-1} (-1)^{\ell}
\tr D\eta_{\ell+1} \cdots D\eta_{\ell+p}b_{\ell+p}
(R^E)^\ell_{\ell+p}b_\ell^{-1} =
\sum_{\ell=0}^{k-1} (-1)^{\ell} \tr %\big(
(\alpha_\ell + \beta_\ell +
\gamma_\ell) (R^E)^\ell_{\ell+p} b_{\ell}^{-1} , %\big),
\end{multline}
where we have used Lemma ~\ref{sniken} for the last equality and
$\alpha_\ell, \beta_\ell, \gamma_\ell$ are as in the lemma.

From the definitions of $\alpha_\ell$ and $\beta_{\ell}$ it follows that
\begin{multline}\label{fromma}
\tr %\big(
\alpha_\ell (R^E)^{\ell}_{\ell+p} b_\ell^{-1}%\big )
=
\tr %\big(
\eta_{\ell+1}\delta_{\ell+1} (R^E)^{\ell}_{\ell+p}
b_\ell^{-1}%\big )
=
\tr %\big(
\delta_{\ell+1} (R^E)^{\ell}_{\ell+p}
  b_\ell^{-1}\eta_{\ell+1}%\big )
= \\
\tr %\big(
\delta_{\ell+1} (R^E)^{\ell}_{\ell+p}
  \varphi_{\ell+1}b_{\ell+1}^{-1}%\big )
=
\tr %\big(
\delta_{\ell+1} \varphi_{\ell+p+1}(R^E)^{\ell+1}_{\ell+p+1}
  b_{\ell+1}^{-1}%\big )
=
\tr %\big(
\beta_{\ell+1} (R^E)^{\ell+1}_{\ell+p+1}
  b_{\ell+1}^{-1}%\big )
\end{multline}
for $\ell = 0,\ldots,k-1$.
Here we have used \eqref{banal} for the second equality; indeed, note
that the sign is $1$ since $\delta_{\ell+1} (R^E)^{\ell}_{\ell+p}
b_\ell^{-1}$ and $\eta_{\ell+1}$ have odd total and endomorphism degrees.
Moreover, we have used \eqref{mal}
for the third equality and \eqref{eq:nabla2} for the fourth equality.
By \eqref{boden}, we then get that
\begin{equation}\label{fotas}
\tr %\big (
\beta_0
  (R^E)^0_{p}b_0^{-1}%\big )
= \tr %\big(
\delta_0 \varphi_p (R^E)^0_p b^{-1}_0%\big )
=0.
\end{equation}
Moreover note that
\begin{multline}\label{overste}
\tr %\big (
\gamma_\ell (R^E)^\ell_{\ell+p}b_\ell^{-1}%\big )
=
\tr %\big (
b_\ell D\varphi_{\ell+1}\cdots D\varphi_{\ell+p}
(R^E)^\ell_{\ell+p}b_\ell^{-1}%\big )
=\\
\tr %\big (
b_\ell^{-1}b_\ell D\varphi_{\ell+1}\cdots D\varphi_{\ell+p}
(R^E)^\ell_{\ell+p}%\big )
=\tr %\big (
D\varphi_{\ell+1}\cdots D\varphi_{\ell+p}
(R^E)^\ell_{\ell+p}, %\big ),
\end{multline}
where we have used \eqref{banal} for the second equality; indeed,
the sign is $1$ since $b_{\ell}$ is of even total and endomorphism
degree.
From \eqref{furir2}, \eqref{fromma}, \eqref{fotas}, and \eqref{overste} we conclude
\begin{equation}\label{soto}
(\tr D\eta R^G)_p
%\sum_{\ell=0}^{k-1} (-1)^\ell \tr \big (D\eta_{\ell+1}\cdots
%  D\eta_{\ell+p} (R^G)^\ell_{\ell+p}\big )
=
(-1)^{k-1}\tr %\big (
\beta_k
  (R^E)^k_{k+p}b_k^{-1}%\big )
+\sum_{\ell=0}^{k-1} (-1)^\ell \tr %\big (
D\varphi_{\ell+1}\cdots D\varphi_{\ell+p}
  (R^E)^\ell_{\ell+p}. %\big ).
\end{equation}

\subsubsection{Computing $(\tr D\psi R^F)_p$}\label{lilla}

Since $F_\ell=0$ for $\ell<k$, the only nonvanishing current that one takes the trace of in $(\tr D\psi R^F)_p$ is
%\begin{equation}
$(-1)^k D\psi_{k+1} \cdots D\psi_{k+p} (R^F)^k_{k+p}.$
%\end{equation}
%Let us first consider the form $D\psi_{k+1} \cdots D\psi_{k+p-1}$.
A computation yields\footnote{In the sum $D\tilde\eta_{k+2} \cdots D\tilde\eta_m$ and $D\varphi_{m+1}\cdots D\varphi_{k+p-1}$ are to be interpreted as $1$
    if $m = k+1$ and $m=k+p-1$, respectively.}
\begin{multline}\label{strunta}
D\psi_{k+1} \cdots D\psi_{k+p-1}=\\
D\left[\begin{array}{cc} -\tilde\eta_{k+2} & \varepsilon b_{k+1} \end{array}\right]
D\left[\begin{array}{cc} -\tilde\eta_{k+3} & \varepsilon b_{k+2} \\ 0 &
                                                             \varphi_{k+2} \end{array}\right]
                                                             \cdots
D\left[\begin{array}{cc} -\tilde\eta_{k+p-1} & \varepsilon b_{k+p-2} \\ 0 &
                                                             \varphi_{k+p-2} \end{array}\right]
D\left[\begin{array}{c} \varepsilon b_{k+p-1} \\
                          \varphi_{k+p-1} \end{array}\right]
=\\
\sum_{m=k+1}^{k+p-1}(-1)^{m-k-1} D\tilde\eta_{k+2}\cdots D\tilde\eta_m
D(\varepsilon b_m)
  D\varphi_{m+1}\cdots D\varphi_{k+p-1}.
\end{multline}
Recall that by \eqref{bord}
\begin{equation}\label{ren}
D(\varepsilon b_m)=D\varepsilon b_m
+(-1)^{\deg\varepsilon}\varepsilon Db_m = -\varepsilon Db_m.
\end{equation}
Moreover by \eqref{mars} we have
\begin{equation}\label{elk}
D\tilde\eta_\kappa\varepsilon=(-1)^{\deg_f (D\eta_\kappa)}
\varepsilon D\eta_\kappa=
-\varepsilon D\eta_\kappa.
\end{equation}
Using \eqref{ren} and then \eqref{elk} repeatedly for $\kappa = m,
m-1, \ldots, k+2$ we get
\begin{multline}\label{pukas}
D\tilde\eta_{k+2}\cdots D\tilde\eta_m
D(\varepsilon b_m)
  D\varphi_{m+1}\cdots D\varphi_{k+p-1}
=\\
(-1)^{m-k}
\varepsilon D\eta_{k+2}\cdots D \eta_m Db_m
  D\varphi_{m+1}\cdots D\varphi_{k+p-1}.
\end{multline}

Now
\begin{multline}\label{motvalls}
D\psi_{k+1} \cdots %D\psi_{k+p-1}
D\psi_{k+p}= \\
-\Big (\sum_{m=k+1}^{k+p-1} \varepsilon D\eta_{k+2}\cdots D \eta_m Db_m
D\varphi_{m+1}\cdots D\varphi_{k+p-1} \Big ) D\varphi_{k+p}=\\
-\sum_{m=k+1}^{k+p} \varepsilon D\eta_{k+2}\cdots D \eta_m Db_m
D\varphi_{m+1}\cdots D\varphi_{k+p}
= -\varepsilon \delta_{k+1},
\end{multline}
where $\delta_{k+1}$ is as in Lemma \ref{sniken};
here we have used $\psi_{k+p}=\varphi_{k+p}$, cf.\ \eqref{eq:bigDiagram}, \eqref{strunta}, and \eqref{pukas} for the
first equality, and $b_{k+p}=0$ for the second.
It follows that
\begin{multline*}
\tr%\big (
D\psi_{k+1} \cdots D\psi_{k+p} (R^F)^k_{k+p} %\big )
=\tr%\big (
D\psi_{k+1} \cdots D\psi_{k+p} (R^E)^k_{k+p}a_k %\big )
=\\
\tr%\big (
a_k D\psi_{k+1} \cdots D\psi_{k+p} (R^E)^k_{k+p}%\big )
=
  \tr%\big (
b_k^{-1} \eta_{k+1}\varepsilon^{-1} D\psi_{k+1} \cdots D\psi_{k+p}
(R^E)^k_{k+p}%\big )
=\\
  -\tr%\big (
b_k^{-1}\eta_{k+1}\varepsilon^{-1} \varepsilon \delta_{k+1}
(R^E)^k_{k+p}%\big )
= -\tr%\big (
b_k^{-1}\alpha_k (R^E)^k_{k+p},
% \big).
\end{multline*}
where $\alpha_k$ is in Lemma ~\ref{sniken}.
Here we have used the comparison formula \eqref{cirkus3} and that
$a_{k+p}=\Id_{E_{k+p}}$, \eqref{banal},
the definition of $a_k$, and \eqref{motvalls} for
the first, second, third, and fourth equality, respectively.
Since $\eta_{k+p} = 0$, cf.\ \eqref{eq:bigDiagram}, by Lemma ~\ref{sniken} we get that
\begin{equation*}
    -b_k^{-1} \alpha_k = b_k^{-1} (\beta_k + \gamma_k)  = b_k^{-1} \beta_k + D\varphi_{k+1}\cdots D\varphi_{k+p}.
\end{equation*}
Thus
\begin{multline} \label{eq:DxiRK}
(\tr D\psi R^F)_p
=(-1)^k \tr%\big (
D\psi_{k+1} \cdots D\psi_{k+p} (R^F)^k_{k+p}%\big )
=\\
(-1)^k \tr%\big (
b_k^{-1} \beta_k  (R^E)^k_{k+p}%\big )
+(-1)^k \tr%\big (
D\varphi_{k+1}\cdots D\varphi_{k+p} (R^E)^k_{k+p}. %\big ).
\end{multline}

\smallskip

Finally using \eqref{banal} we conclude from \eqref{soto} and
\eqref{eq:DxiRK} that
\begin{multline*}
(\tr D\psi R^F)_p+(\tr D\eta R^G)_p=
(-1)^k \tr b_k^{-1} \beta_k  (R^E)^k_{k+p}+
(-1)^{k-1}\tr \beta_k (R^E)^k_{k+p}b_k^{-1}\\
+\sum_{\ell=0}^{k} (-1)^\ell \tr
D\varphi_{\ell+1}\cdots D\varphi_{\ell+p}
  (R^E)^\ell_{\ell+p} = (\tr D\varphi R^E)_p.
\end{multline*}

\section{The Koszul complex}\label{koszulsection}
Let $(E, \varphi)$ be the Koszul complex of a tuple $f$ of holomorphic
functions as in Section \ref{ssect:koszul}, and assume that it is
equipped with the trivial metric.
Recall that if $m=p=\codim Z(f)$, then \eqref{eq:main} just equals \eqref{chpl}.
In this section we describe
the currents in \eqref{eq:main} when $m>p$.
Then
   \begin{equation*}
    [E]=\sum (-1)^\ell [\mathcal H_\ell(E)] = 0,
    \end{equation*}
see, e.g., \cite{Roberts}*{Corollary~5.2.9~(ii)};
in particular the Koszul complex cannot be exact at all levels
$\ell>0$.

To describe the left-hand side of \eqref{eq:main}, let us recall the
construction of $R$.
Let $\sigma = \sum \bar{f_i}e_i/|f|^2$. Then $R^\ell_k$ is defined as
multiplication with
the analytic continuation to $\lambda=0$ of the form
$\dbar|f|^{2\lambda}\wedge \sigma\wedge (\dbar\sigma)^{k-\ell-1}$, see \cite{AndIdeals}.
Since $D\varphi_j$ is just contraction with
$\sum df_j \wedge e^*_j$,
a computation yields that
\begin{equation*}
\tr D\varphi_{\ell+1} \cdots D\varphi_{k} R^\ell_{k} =
\binom{m-(k-\ell)}{\ell}
\tr D\varphi_1 \cdots D\varphi_{k-\ell} R^0_{k-\ell},
\end{equation*}
cf.\ \eqref{disputation}.
Since $(E,\varphi)$ ends at level $m$, $(2\pi i)^p p!$ times the left-hand side of
\eqref{eq:main} equals
\begin{equation*}\label{snara}
        \sum_{\ell=0}^{m-p} (-1)^\ell \tr D\varphi_{\ell+1} \cdots
        D\varphi_{\ell+p} R^\ell_{\ell+p} =
\sum_{\ell=0}^{m-p} (-1)^{\ell} \binom{m-p}{\ell}  \tr D\varphi_1 \cdots D\varphi_p R^0_p = 0.
    \end{equation*}
To conclude, \eqref{eq:main} holds since both sides vanish, so we get
an explicit proof of Theorem ~\ref{thm:main} in this case.

Next, let us consider the individual terms in the left-hand side of
\eqref{eq:main} and in $[E]$. First note that since the image of
$\varphi_1$ equals $\J(f)$,  $[\mathcal
H_0(E)]$ is just the cycle of $\Ok_{Z}=\Ok_X/\J(f)$.
In \cite{ALelong}, Andersson proved that
    \begin{equation} \label{AndPL}
        \frac{1}{(2\pi i)^p p!} \tr D\varphi_1 \cdots D\varphi_p R^0_p = \sum \alpha_j [Z_j^p],
    \end{equation}
    where $Z_j^p$ are the irreducible components of $Z$
of codimension $p$ and $\alpha_j$
    is the \emph{geometric} or \emph{Hilbert-Samuel} multiplicity of $\mathcal{J}(f)$ along $Z_j^p$.
    For a complete intersection ideal, the geometric multiplicities
    coincide with the algebraic multiplicities and so \eqref{AndPL}
    generalizes \eqref{chpl}. In general, however, the multiplicities are different,
    cf.\ Example \ref{strumpa} below, and thus it is not true in
    general that the individual terms
$\tr D\varphi_{\ell+1} \cdots D\varphi_{\ell+p} R^\ell_{\ell+p}$ and
$[\mathcal H_\ell (E)]$ at level $\ell$ coincide.

\begin{ex}\label{strumpa}
If $\J(f)$ is generated by monomials and $Z(f) = \{ 0 \}$, then the
algebraic multiplicity %of $\J(f)$
equals $n!$ times the volume of
$\R^n_+\setminus \Gamma$, where the $\Gamma$ is the convex hull in
$\R^n$ of the exponents of the monomials in $\J(f)$, see, e.g.,
\cite{Roberts}*{exercise~2.8}.  If $\J(f)$ is not a
complete intersection ideal, this does not coincide with the geometric
multiplicity, which is just the number of monomials that are not in
$\J(f)$.

For example, if $f=(z_1^2,z_1 z_2, z_2^2)$ in $\C^2$, then the
algebraic multiplicity of $\J(f)$ is $4$, while the geometric multiplicity is
$3$. Thus in this case the first term in \eqref{eq:main} equals
\begin{equation*}
 \frac{1}{(2\pi i)^2 2!}\tr D\varphi_1 D\varphi_2 (R^E)^0_2 = 4[0]
\end{equation*}
whereas $[\mathcal{H}_0(E)]=3[0]$.
\end{ex}

\section{Non-pure dimensional homology}\label{ickeren}

In \cite{LW} we get a version \cite{LW}*{Theorem~1.5} of
Theorem~\ref{thm:higherrank} when $\mathcal{F} = \Ok_Z$
for a general, not necessarily pure dimensional, analytic space $Z$.
By the same arguments we get a version for general coherent sheaves
$\mathcal F$.
\begin{cor}\label{regn}
Let $\mathcal F$ be a coherent sheaf, let $(E,\varphi)$ be a hermitian
locally free resolution of $\mathcal F$,
    and let $D$ be the connection on $\End E$ induced by arbitrary $(1,0)$-connections on $E_0,\ldots,E_N$.
Moreover, let $W_k$ be the union of all irreducible components of $\supp
\mathcal F$ %$Z$
of codimension $k$.
Then
\begin{equation} \label{eqNonPure}
    \sum_k \frac{1}{(2\pi i)^k k!} \tr D\varphi_1 \cdots D\varphi_k
    {\mathbf 1_{W_k}} R^0_k = [\mathcal F].
\end{equation}
\end{cor}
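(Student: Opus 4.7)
The plan is to reduce Corollary~\ref{regn} to Theorem~\ref{thm:higherrank} by working generically on each irreducible component of $\supp \F$, in the spirit of the proof of \cite{LW}*{Theorem~1.5}. First I would split \eqref{eqNonPure} by bidegree: the $k$-th summand on the left is a pseudomeromorphic $(k,k)$-current supported on $W_k$, and the corresponding contribution to $[\F]$ on the right is $\sum_j m_j[Z_j^k]$, where $Z_j^k$ ranges over the codimension~$k$ irreducible components of $\supp \F$. By the dimension principle applied in bidegree $(k,k)$, it then suffices, for each such $Z_j^k$, to establish the identity in a neighborhood of a generic point of $Z_j^k$.

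Next, I would pick a point $z_0$ in the regular part of $Z_j^k$ that lies in no other irreducible component of $\supp \F$; this is possible since every distinct component meets $Z_j^k$ in a proper subvariety. On a small neighborhood $U$ of $z_0$, the support of $\F|_U$ equals $Z_j^k\cap U$ and has pure codimension $k$, and $W_l\cap U = \emptyset$ for $l\neq k$. The key step is to check that on such a $U$ the full sum on the left of \eqref{eqNonPure} collapses to a single term. Indeed, for $l<k$ the current $R^0_l$ vanishes on $U$ by the dimension principle, being a pseudomeromorphic $(0,l)$-current with support in $\supp \F\cap U$, a set of codimension $k>l$; for $l>k$, the indicator $\mathbf{1}_{W_l}$ is identically zero on $U$; and for $l=k$, the current $R^0_k$ is already supported on $W_k\cap U$, so $\mathbf{1}_{W_k}R^0_k = R^0_k$ on $U$. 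Applying Theorem~\ref{thm:higherrank} to the pure codimension~$k$ sheaf $\F|_U$ then gives that the left-hand side of \eqref{eqNonPure} equals $[\F|_U] = m_j[Z_j^k]|_U$ on $U$, which is exactly the restriction of the right-hand side.

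The main obstacle I anticipate is largely bookkeeping rather than conceptual: one must justify that multiplication by $\mathbf{1}_{W_k}$ is well-defined on pseudomeromorphic currents, preserves pseudomeromorphicity, and interacts correctly with the form-valued factors $D\varphi_1\cdots D\varphi_k$ inside the trace, so that the dimension principle can be invoked in each bidegree. This is the restriction theory of pseudomeromorphic currents from \cite{AW2} used in the proof of \cite{LW}*{Theorem~1.5}, and adapting it to a general coherent sheaf requires no new ideas beyond substituting Theorem~\ref{thm:higherrank} for its structure-sheaf special case.
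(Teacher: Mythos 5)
Your proposal is correct and follows the same route the paper indicates: the paper delegates the argument to the proof of \cite{LW}*{Theorem~1.5}, which is precisely the reduction you describe—split by bidegree, use the dimension principle to reduce to generic points of each codimension-$k$ component of $\supp\F$, observe that locally $\F$ has pure codimension, that the characteristic functions and the dimension principle kill all but the $k$-th term, and then invoke Theorem~\ref{thm:higherrank}. The one point worth making explicit (which the paper itself handles via Lemma~\ref{lmaindep}) is that to treat the $k$-th summand as a pure $(k,k)$-current one should first use connection-independence to choose connections whose $(0,1)$-part is $\dbar$, so that $D\varphi_1\cdots D\varphi_k$ is a $(k,0)$-form.
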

Pseudomeromorphic currents allow for multiplication by characteristic
functions of varieties or, more generally, constructible sets,  see
\cite[Theorem~3.1]{AW2}, and thus
${\mathbf 1_{W_k}} R^0_k$ is a well-defined pseudomeromorphic
current.

It is natural to ask whether we also obtain a version of
Theorem~\ref{thm:main} when the homology groups do not have pure
dimension or are not of the same dimension. However, this does not seem to follow as easily.
Since $[E]$ is an alternating sum of cycles of sheaves, there are in
general components $m_i[Z_i]$ and $m_j[Z_j]$ of $[E]$ such that
$Z_i$ is a proper subvariety of $Z_j$. If we remove these ``embedded
components'' of $[E]$ we can get a formula like \eqref{eq:main}:
Let $W = \cup \supp \mathcal{H}_\ell(E)$ and let
\[
[E]_{W}=\sum (-1)^\ell [\mathcal{H}_\ell(E)]_{W},
\]
where $[\mathcal{H}_\ell(E)]_W$ is the cycle of
$\mathcal{H}_\ell(E)$ but where we
only include the irreducible components $Z_i$ that are minimal primes of $W$.
Moreover, let $W_k$ be the union of the irreducible components of $W$
of codimension $k$.
Then by the same arguments %based on the dimension principle
as in the proof of \cite{LW}*{Theorem~1.5} we get
\begin{equation} \label{trubbla}
    \sum_{k,\ell} \frac{1}{(2\pi i)^k k!}  \tr D\varphi_{\ell+1} \cdots
    D\varphi_{\ell+k} {\mathbf 1_{W_k}} R^{\ell}_{\ell+k} =
[E]_{W}.
%\sum (-1)^\ell [\mathcal{H}_\ell(E_\bullet)]_{Z},
\end{equation}
%Hopefully
Maybe one could get a similar formula for $[E]$ by
considering characteristic functions of
%
%restrictions to
different sets at different levels. For example if $W^\ell_k$ is the union of
the irreducible components of $\supp \mathcal{H}_\ell(E)$ of
codimension $k$,
one could hope that
\begin{equation*}%\label{ballerina}
    \sum_{k,\ell} \frac{1}{(2\pi i)^k k!}
    \tr D\varphi_{\ell+1} \cdots D\varphi_{\ell+k} {\mathbf 1_{W^\ell_k}}
    R^{\ell}_{\ell+k} = [E]. %\sum (-1)^\ell [\mathcal{H}_\ell(E_\bullet)].
\end{equation*}
However, this does not seem to follow as immediately from the dimension
principle as ~\eqref{trubbla}.

\begin{bibdiv}
\begin{biblist}

\bib{AndIdeals}{article}{
   author={Andersson, Mats},
   title={Residue currents and ideals of holomorphic functions},
   journal={Bull. Sci. Math.},
   volume={128},
   date={2004},
   number={6},
   pages={481--512},
   %issn={0007-4497},
   %review={\MR{2074610 (2005e:32006)}},
   %doi={10.1016/j.bulsci.2004.03.003},
}

\bib{ALelong}{article}{
   author={Andersson, Mats},
   title={Residues of holomorphic sections and Lelong currents},
   journal={Ark. Mat.},
   volume={43},
   date={2005},
   number={2},
   pages={201--219},
   %issn={0004-2080},
   %review={\MR{2172988}},
   %doi={10.1007/BF02384777},
}

\bib{AndCH}{article}{
   author={Andersson, Mats},
   title={Uniqueness and factorization of Coleff-Herrera currents},
   journal={Ann. Fac. Sci. Toulouse Math.},
   volume={18},
   date={2009},
   number={4},
   pages={651--661},
   %issn={0240-2963},
}

\bib{AW1}{article}{
   author={Andersson, Mats},
   author={Wulcan, Elizabeth},
   title={Residue currents with prescribed annihilator ideals},
   %language={English, with English and French summaries},
   journal={Ann. Sci. \'Ecole Norm. Sup.},
   volume={40},
   date={2007},
   number={6},
   pages={985--1007},
   %issn={0012-9593},
   %review={\MR{2419855}},
   %doi={10.1016/j.ansens.2007.11.001},
}

\bib{AW2}{article}{
   author={Andersson, Mats},
   author={Wulcan, Elizabeth},
   title={Decomposition of residue currents},
   journal={J. Reine Angew. Math.},
   volume={638},
   date={2010},
   pages={103--118},
   %issn={0075-4102},
   %review={\MR{2595337}},
   %doi={10.1515/CRELLE.2010.004},
}

\bib{ALJ}{book}{
   author={Ang{\'e}niol, B.},
   author={Lejeune-Jalabert, M.},
   title={Calcul diff\'erentiel et classes caract\'eristiques en
   g\'eom\'etrie alg\'ebrique},
   %language={French},
   series={Travaux en Cours [Works in Progress]},
   volume={38},
   %note={With an English summary},
   publisher={Hermann, Paris},
   date={1989},
   %pages={vi+130},
   %isbn={2-7056-6108-3},
   %review={\MR{1001363 (90h:14004)}},
}

\bib{Bou}{book}{
   author={Bourbaki, N.},
   title={\'El\'ements de math\'ematique. Alg\`ebre commutative. Chapitre 4},
   %language={French},
   %note={Reprint of the 1985 original},
   publisher={Springer-Verlag, Berlin},
   date={1985},
   %pages={ii+187},
   %isbn={978-3-540-34394-3},
   %isbn={3-540-34394-6},
   %review={\MR{2333539}},
}

\bib{CH}{book}{
   author={Coleff, Nicolas R.},
   author={Herrera, Miguel E.},
   title={Les courants r\'esiduels associ\'es \`a une forme m\'eromorphe},
   %language={French},
   series={Lecture Notes in Mathematics},
   volume={633},
   publisher={Springer},
   place={Berlin},
   date={1978},
   %pages={x+211},
   %isbn={3-540-08651-X},
   %review={\MR{492769 (80j:32016)}},
}

\bib{Dol}{article}{
   author={{Do}lbeault, Pierre},
   title={Courants r\'esidus des formes semi-m\'eromorphes},
   conference={
      title={S\'eminaire Pierre Lelong (Analyse) (ann\'ee 1970)},
      address={},
      date={},
   },
   book={
        title={Lecture Notes in Math.},
        volume={205},
        publisher={Springer, Berlin},
   },
   date={1971},
   pages={56--70},
}

\bib{Dress}{article}{
   author={{Dr}ess, Andreas},
   title={A new algebraic criterion for shellability},
   journal={Beitr\"age Algebra Geom.},
   volume={34},
   date={1993},
   number={1},
   pages={45--55},
   %issn={0138-4821},
   %review={\MR{1239277}},
}

\bib{Eis}{book}{
   author={Eisenbud, David},
   title={Commutative algebra},
   series={Graduate Texts in Mathematics},
   volume={150},
   note={With a view toward algebraic geometry},
   publisher={Springer-Verlag},
   place={New York},
   date={1995},
   %pages={xvi+785},
   %isbn={0-387-94268-8},
   %isbn={0-387-94269-6},
   %review={\MR{1322960 (97a:13001)}},
}

\bib{ERS}{article}{
   author={Eisenbud, David},
   author={Riemenschneider, Oswald},
   author={Schreyer, Frank-Olaf},
   title={Projective resolutions of Cohen-Macaulay algebras},
   journal={Math. Ann.},
   volume={257},
   date={1981},
   number={1},
   pages={85--98},
   %issn={0025-5831},
   %review={\MR{630648 (83m:13008)}},
   %doi={10.1007/BF01450656},
}

\bib{Fulton}{book}{
   author={Fulton, William},
   title={Intersection theory},
   series={Ergebnisse der Mathematik und ihrer Grenzgebiete. 3. Folge.},
   volume={2},
   edition={2},
   publisher={Springer-Verlag},
   place={Berlin},
   date={1998},
   %pages={xiv+470},
   %isbn={0-387-98549-2},
}

\bib{HL}{article}{
author={Herrera, Miguel E.\ M.},
   author={Lieberman, David I.},
   title={Residues and principal values on complex spaces},
   journal={Math. Ann.},
   volume={194},
   date={1971},
   pages={259--294},
}

\bib{K}{article}{
    AUTHOR = {Kleiman, Steven L.},
     TITLE = {Intersection theory and enumerative geometry: a decade in
              review},
 BOOKTITLE = {Algebraic geometry, {B}owdoin, 1985 ({B}runswick, {M}aine,
              1985)},
    SERIES = {Proc. Sympos. Pure Math.},
    VOLUME = {46},
     PAGES = {321--370},
      NOTE = {With the collaboration of Anders Thorup on \S 3},
 PUBLISHER = {Amer. Math. Soc., Providence, RI},
      YEAR = {1987},
   %MRCLASS = {14N10 (14C17)},
  %MRNUMBER = {927987},
%MRREVIEWER = {Ragni Piene},
}

\bib{LComp}{article}{
   author={L\"ark\"ang, Richard},
   title={A comparison formula for residue currents},
   journal={Math. Scand.},
   status={to appear},
   %date={2012},
   eprint={arXiv:1207.1279 [math.CV]},
   url={http://arxiv.org/abs/1207.1279},
}

\bib{LW}{article}{
   author={L\"ark\"ang, Richard},
   author={Wulcan, Elizabeth},
   JOURNAL = {Indiana Univ.\ Math.\ J.},
  FJOURNAL = {Indiana University Mathematics Journal},
    VOLUME = {67},
      YEAR = {2018},
    NUMBER = {3},
     PAGES = {1085--1114},
      ISSN = {0022-2518},
%   MRCLASS = {32C30 (32A27 32U40)},
%  MRNUMBER = {3820242},
%MRREVIEWER = {Chia-Chi Tung},
%       DOI = {10.1512/iumj.2018.67.7285},
%       URL = {https://doi.org/10.1512/iumj.2018.67.7285},
}

\bib{LJ1}{article}{
   author={Lejeune-Jalabert, Monique},
   title={Remarque sur la classe fondamentale d'un cycle},
   %language={French, with English summary},
   journal={C. R. Acad. Sci. Paris S\'er. I Math.},
   volume={292},
   date={1981},
   number={17},
   pages={801--804},
   %issn={0249-6321},
   %review={\MR{622423 (83d:32008)}},
}

\bib{PTY}{article}{
   author={Passare, Mikael},
   author={Tsikh, August},
   author={Yger, Alain},
   title={Residue currents of the Bochner-Martinelli type},
   journal={Publ. Mat.},
   volume={44},
   date={2000},
   number={1},
   pages={85--117},
   %issn={0214-1493},
   %review={\MR{1775747 (2001i:32006)}},
   %doi={10.5565/PUBLMAT_44100_02},
}

\bib{Roberts}{book}{
   author={Roberts, Paul C.},
   title={Multiplicities and Chern classes in local algebra},
   series={Cambridge Tracts in Mathematics},
   volume={133},
   publisher={Cambridge University Press, Cambridge},
   date={1998},
   %pages={xii+303},
   %isbn={0-521-47316-0},
   %review={\MR{1686450}},
   %doi={10.1017/CBO9780511529986},
}

\bib{SS}{article}{
   author={Scheja, G{\"u}nter},
   author={Storch, Uwe},
   title={Quasi-Frobenius-Algebren und lokal vollst\"andige Durchschnitte},
   journal={Manuscripta Math.},
   volume={19},
   date={1976},
   number={1},
   pages={75--104},
   %issn={0025-2611},
   %review={\MR{0407039 (53 \#10822)}},
}

\bib{Stacks}{webpage}{
    author={Stacks Project Authors, {The}},
    title={Stacks Project},
    url={http://stacks.math.columbia.edu},
    year={2018},
}

\bib{Weibel}{book}{
   author={Weibel, Charles A.},
   title={An introduction to homological algebra},
   series={Cambridge Studies in Advanced Mathematics},
   volume={38},
   publisher={Cambridge University Press, Cambridge},
   date={1994},
   %pages={xiv+450},
   %isbn={0-521-43500-5},
   %isbn={0-521-55987-1},
   %review={\MR{1269324}},
   %doi={10.1017/CBO9781139644136},
}

\end{biblist}
\end{bibdiv}

\end{document}